\newtheorem{theorem}{Theorem}
\newtheorem{corollary}{Corollary}
\theoremstyle{plain}
\newtheorem*{deffinition}{Definition}
\newtheorem*{propposition}{Proposition}
\newtheorem*{lemmma}{Lemma}
\newtheorem*{corrollary}{Corollary}
\theoremstyle{remark}
\newtheorem*{remmark}{Remark}
\begin{document}\sloppy

\title{On the endomorphism monoids of some groups with abelian automorphism group}

\author{Alexander Bors\thanks{The author is supported by the Austrian Science Fund (FWF):
Project F5504-N26, which is a part of the Special Research Program \enquote{Quasi-Monte Carlo Methods: Theory and Applications}. \newline 2010 \emph{Mathematics Subject Classification}: 20D15, 20D45, 20M32. \newline \emph{Key words and phrases:} groups with abelian automorphism group, endomorphism monoid, two-sided semidirect product}}

\date{}

\maketitle

\begin{abstract}
We investigate the endomorphism monoids of certain finite $p$-groups of order $p^8$ first studied by Jonah and Konvisser in 1975 as examples for finite $p$-groups with abelian automorphism group, and we show some necessary conditions for a finite $p$-group to have commutative endomorphism monoid. As a by-product, apart from formulas for the number of conjugacy classes of endomorphisms of said groups, we will be able to derive the following: There exist nonabelian groups with commutative endomorphism monoid, and having commutative endomorphism monoid is a group property strictly stronger than having abelian automorphism group. Furthermore, using a result of Curran, this will enable us to give, for all primes $p$, examples of finite $p$-groups which are direct products and have abelian automorphism group.
\end{abstract}

\section{Introduction}

For linguistical simplicity, let us follow the terminology used on the Groupprops Wiki \cite{GP} and call groups with abelian automorphism group \textit{aut-abelian} (and the corresponding abstract property \textit{aut-abelianity}). In analogy, in this paper we will call groups with commutative endomorphism monoid \textit{end-commutative} (and the property itself \textit{end-commutativity}). Consequences of aut-abelianity (and stronger assumptions, such as having an elementary abelian automorphism group) have been studied since the beginning of the 20th century; for a short history, see \cite{JRY13a}. There is still research going on in this field; for example, the paper \cite{JY12a} from 2012 gave, for the first time, examples of nonabelian finite $p$-groups with abelian automorphism group that are \textit{not} special.

\section{Some Technical Preliminaries}

We shall be using the following well-known facts: An aut-abelian group $G$ is nilpotent of class $2$ (in particular, since any finite nilpotent group is the direct product of its Sylow $p$-subgroups, the study of finite aut-abelian groups reduces to the one of finite aut-abelian $p$-groups) and all its automorphisms are central, meaning they are each of the form $x\mapsto x\cdot f(x)$ for some homomorphism $f:G\rightarrow\zeta G$. Conversely, for any group $G$ and any homomorphism $f:G\rightarrow\zeta G$, the map $G\rightarrow G,x\mapsto x\cdot f(x)$ (which we will denote by $f^{\ast}$), is an endomorphism of $G$, and it has trivial kernel if and only if $1$ is the only element in $\zeta G$ inverted by $f$.

From these observations, it is not difficult to see that a co-hopfian group $G$ is aut-abelian if and only if all its automorphisms are central and the homomorphisms $G\rightarrow\zeta G$ which only invert the identity pairwise commute in the endomorphism monoid. In particular, we can derive the following:

\begin{propposition}\label{endCommProp}
Let $G$ be a co-hopfian aut-abelian group such that all endomorphisms $\varphi$ of $G$ that are not automorphisms have their image contained in $\zeta G$ and only invert the identity. Then $G$ is end-commutative.
\end{propposition}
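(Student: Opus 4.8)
The plan is to prove directly that any two endomorphisms $\varphi,\psi$ of $G$ commute, distinguishing cases according to which of them are automorphisms. The one preliminary step that does the real work is this: if $\psi\in\mathrm{End}(G)$ is \emph{not} an automorphism, then by hypothesis $\psi(G)\subseteq\zeta G$, so $\psi$ may be regarded as a homomorphism $G\rightarrow\zeta G$ and the endomorphism $\psi^{\ast}\colon x\mapsto x\cdot\psi(x)$ is defined; again by hypothesis $\psi$ inverts only the identity, so by the preliminaries $\psi^{\ast}$ has trivial kernel, hence is injective, hence — since $G$ is co-hopfian — $\psi^{\ast}\in\mathrm{Aut}(G)$. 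Thus $\psi\mapsto\psi^{\ast}$ sends the non-automorphisms of $G$ into the \emph{abelian} group $\mathrm{Aut}(G)$, and this is the lever I would use to transport commutativity from $\mathrm{Aut}(G)$ to all of $\mathrm{End}(G)$.

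Now the case analysis. If $\varphi$ and $\psi$ are both automorphisms, they commute because $G$ is aut-abelian. Otherwise, after possibly renaming, $\psi$ is not an automorphism. If $\varphi$ \emph{is} an automorphism, then, $G$ being aut-abelian, $\varphi$ is central, say $\varphi=g^{\ast}$ for a homomorphism $g\colon G\rightarrow\zeta G$; since $g^{\ast},\psi^{\ast}\in\mathrm{Aut}(G)$ we have $g^{\ast}\psi^{\ast}=\psi^{\ast}g^{\ast}$. Here $g^{\ast}\psi^{\ast}(x)=g^{\ast}(x)\cdot g^{\ast}(\psi(x))=x\cdot g(x)\psi(x)g(\psi(x))$ and likewise $\psi^{\ast}g^{\ast}(x)=x\cdot\psi(x)g(x)\psi(g(x))$, where I have used that $g$ and $\psi$ are homomorphisms into $\zeta G$ and that values in $\zeta G$ commute with everything; cancelling $x$ and then the central factor $g(x)\psi(x)=\psi(x)g(x)$ leaves $g(\psi(x))=\psi(g(x))$ for all $x\in G$. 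On the other hand $(\varphi\psi)(x)=g^{\ast}(\psi(x))=\psi(x)\cdot g(\psi(x))$ while $(\psi\varphi)(x)=\psi\bigl(x\cdot g(x)\bigr)=\psi(x)\cdot\psi(g(x))$, so $\varphi\psi=\psi\varphi$ is \emph{equivalent} to $g(\psi(x))=\psi(g(x))$, and we are done. If instead $\varphi$ is also not an automorphism, then $\varphi^{\ast}\in\mathrm{Aut}(G)$ too, so $\varphi^{\ast}\psi^{\ast}=\psi^{\ast}\varphi^{\ast}$; now $\varphi^{\ast}\psi^{\ast}(x)=x\cdot\varphi(x)\psi(x)\varphi(\psi(x))$ and $\psi^{\ast}\varphi^{\ast}(x)=x\cdot\psi(x)\varphi(x)\psi(\varphi(x))$, and the same cancellations turn the identity into $\varphi(\psi(x))=\psi(\varphi(x))$ for all $x$, i.e. into $\varphi\psi=\psi\varphi$.

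The individual computations are routine; the two points I would take care over — and which carry all the weight of the hypotheses — are, first, the reduction of a non-automorphism $\psi$ to the automorphism $\psi^{\ast}$, where co-hopficity and the assumption that $\psi$ inverts only the identity are precisely what is needed, and, second, the repeated cancellation of the ``inner'' values $\varphi(x),\psi(x),g(x)$, which is legitimate only because $G$ has nilpotency class $2$, so that these values are central. Note finally that the case split above is exhaustive and remains valid if $G$ happens to be abelian. Hence $\mathrm{End}(G)$ is commutative, i.e. $G$ is end-commutative.
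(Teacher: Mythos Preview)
Your proof is correct and follows essentially the same route as the paper: a three-case split according to which of $\varphi,\psi$ are automorphisms, with the key device being that a non-automorphism $\psi$ yields the automorphism $\psi^{\ast}$, so that commutativity can be pulled back from $\mathrm{Aut}(G)$. The paper packages the non-automorphism/non-automorphism case and the fact $g\psi=\psi g$ into a single preliminary observation (``homomorphisms $G\to\zeta G$ inverting only the identity pairwise commute''), while you unfold that observation explicitly inside the proof; the underlying computation is identical.

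One small remark on your justification: the cancellations work not because $G$ has class~$2$, but simply because $g,\psi,\varphi$ (in the relevant cases) take values in $\zeta G$ by hypothesis. Class~$2$ is a consequence of aut-abelianity but is not what you are actually invoking there.
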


\begin{proof}
Let $\varphi,\psi$ be endomorphisms of $G$. We show that they commute in a case distinction. If both are automorphisms or both are non-automorphisms, this is clear by assumption and the observations before respectively. So assume w.l.o.g.~that $\varphi$ is an automorphism and $\psi$ is a non-automorphism of $G$. Let $\varphi=f^{\ast}$; then $f$ and $\psi$ commute, so for any $x\in G$: $$(\varphi\circ\psi)(x)=f^{\ast}(\psi(x))=\psi(x)\cdot f(\psi(x))=\psi(x)\cdot \psi(f(x))=\psi(f^{\ast}(x))=(\psi\circ\varphi)(x).$$
\end{proof}

This will be the basis for showing that the endomorphism monoids of certain groups introduced in the next section are commutative. In order to describe their structure, we need the following concept first introduced by Rhodes and Tilson in \cite{RT89a}. Let $M_1$ and $M_2$ be monoids, $M_1$ written additively with identity $0$ and $M_2$ multiplicatively with identity $1$. Furthermore, assume $M_2$ acts via monoid endomorphisms on $M_1$ both from the left and the right, that is, we have monoid homomorphisms $\varphi:M_2\rightarrow\mathrm{End}(M_1)$ and $\psi:M_2\rightarrow\mathrm{End}(M_1)^{\ast}$, where the asterisk denotes the opposite monoid. As usual, for $m_1\in M_1,m_2\in M_2$, we write $m_2m_1$ instead of $\varphi(m_2)(m_1)$ and $m_1m_2$ for $\psi(m_2)(m_1)$. Finally, assume that the actions given by $\varphi$ and $\psi$ commute, i.e., for all $m_1\in M_1$ and all $m_2,m_2'\in M_2$, we have $(m_2m_1)m_2'=m_2(m_1m_2')$. Then the \textit{two-sided semidirect product of $M_1$ and $M_2$ w.r.t.~$\varphi$ and $\psi$}, written $M_1\tensor[_\varphi]{\bowtie}{_\psi}M_2$, is defined to be the monoid with underlying set $M_1\times M_2$, the identity $(0,1)$ and the binary operation $(m_1,m_2)\odot(m_1',m_2'):=(m_1m_2'+m_2m_1',m_2m_2')$.

\section{Application to JK-groups}

We will now apply the proposition from the last section to certain groups studied by Jonah and Konvisser in 1975 in \cite{JK75a}. In their honor, let us make the following definition:

\begin{deffinition}\label{jkDef}
Let $p$ be a prime, and let $\lambda=(\lambda_1,\lambda_2)\in\{(1,0),(0,1),(1,1),\ldots,(p-1,1)\}$. The \textbf{JK-group} $G_{\lambda}^{(p)}$ is defined to be the $p$-group of class $2$ generated by four elements $a_1,a_2,b_1,b_2$ with the additional relations $$a_1^p=[a_1,b_1],a_2^p=[a_1,b_1^{\lambda_1}b_2^{\lambda_2}],b_1^p=[a_2,b_1b_2],b_2^p=[a_2,b_2],[a_1,a_2]=[b_1,b_2]=1.$$
\end{deffinition}

Using the standard van der Waerden idea of letting a group act on the set of supposed normal forms, it is not difficult to show that each element of $G_{\lambda}^{(p)}$ has a unique normal form representation as \begin{equation}\label{normalForm}a_1^{k_1}a_2^{k_2}b_1^{l_1}b_2^{l_2}[a_1,b_1]^{r_1}[a_1,b_2]^{r_2}[a_2,b_1]^{r_3}[a_2,b_2]^{r_4}\end{equation} with all exponents from the set $\{0,\ldots,p-1\}$. From this, one can derive that these are all special $p$-groups, with $\zeta G_{\lambda}^{(p)}=(G_{\lambda}^{(p)})'$ elementary abelian of order $p^4$ (with, for instance, $[a_1,b_1],[a_1,b_2],[a_2,b_1],[a_2,b_2]$ as an $\mathbb{F}_p$-basis), and the central quotient also elementary abelian of order $p^4$, with the images of $a_1,a_2,b_1,b_2$ under the canonical projection as an $\mathbb{F}_p$-basis. In view of this, any endomorphism of $G_{\lambda}^{(p)}$ whose image is contained in the center also has the center contained in its kernel (in particular only inverts the identity), whence the composition of any two such homomorphisms is the trivial endomorphism of $G_{\lambda}^{(p)}$. Thus for showing aut-abelianity of $G_{\lambda}^{(p)}$, Jonah and Konvisser \enquote{only} had to show that all automorphisms of $G_{\lambda}^{(p)}$ are central; their argument also gave that these are $p+1$ pairwise non-isomorphic groups of order $p^8$.

Still, all these considerations leave open the question as to whether an endomorphism of $G_{\lambda}^{(p)}$ whose image is \textit{not} contained in the center exists. In view of the proposition from Section 2, a negative answer to this question would immediately yield that $G_{\lambda}^{(p)}$ even is end-commutative. And indeed, this is the strategy for showing:

\begin{theorem}\label{jkTheo1}
The JK-groups $G_{\lambda}^{(p)}$ for $p\geq 3$ and $\lambda\not=(1,0)$ as well as $G_{(0,1)}^{(2)}$ all are end-commutative. In particular, there exist nonabelian finite $p$-groups with commutative endomorphism monoid.
\end{theorem}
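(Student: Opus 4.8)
\emph{Proof strategy.} The plan is to verify, for the groups in the statement, the hypotheses of the Proposition of Section~2 and then invoke it. Each $G:=G_\lambda^{(p)}$ is finite, hence co-hopfian, and is aut-abelian by the result of Jonah and Konvisser; moreover, as observed above, an endomorphism of $G$ whose image lies in $\zeta G$ automatically inverts only the identity. So it suffices to show: \emph{every endomorphism $\varphi$ of $G$ which is not an automorphism has image contained in $\zeta G$.} Write $W:=\zeta G=G'$ and $V:=G/\zeta G$, which by the structure recalled above are $4$-dimensional $\mathbb{F}_p$-vector spaces, with distinguished bases coming from $[a_1,b_1],[a_1,b_2],[a_2,b_1],[a_2,b_2]$ and from the images of $a_1,a_2,b_1,b_2$. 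Since $\zeta G=\Phi(G)$, surjectivity modulo the Frattini subgroup together with finiteness shows that $\varphi$ is an automorphism precisely when the induced linear map $\bar\varphi\in\mathrm{End}(V)$ is invertible; thus the goal reduces to proving that $\mathrm{rank}\,\bar\varphi\in\{0,4\}$ for \emph{every} endomorphism $\varphi$ of $G$.

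The second step is to encode an endomorphism linearly. Besides $\bar\varphi$, an endomorphism $\varphi$ induces a linear map $\varphi_Z\in\mathrm{End}(W)$; the commutator gives a linear map $\beta\colon\Lambda^2V\to W$, surjective (as $G'=\zeta G$) with $2$-dimensional kernel $\langle\bar a_1\wedge\bar a_2,\bar b_1\wedge\bar b_2\rangle$ (the relations $[a_1,a_2]=[b_1,b_2]=1$), and because $\varphi$ is a homomorphism one has $\varphi_Z\circ\beta=\beta\circ\Lambda^2\bar\varphi$. For odd $p$, the identity $(xy)^p=x^py^p[y,x]^{\binom{p}{2}}$ and the elementary abelianity of $\zeta G$ make $x\mapsto x^p$ a homomorphism factoring through a linear map $\pi\colon V\to W$; reading off $\pi$ on the distinguished bases from the four power relations of the normal form \eqref{normalForm} one finds $\det\pi=\lambda_2$, so $\pi$ is invertible exactly when $\lambda\neq(1,0)$, and those four relations become $\varphi_Z\circ\pi=\pi\circ\bar\varphi$ (the remaining two relations amount to $[\bar\varphi\bar a_1,\bar\varphi\bar a_2]=[\bar\varphi\bar b_1,\bar\varphi\bar b_2]=0$ and will not be used).

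Now let $p\geq3$ and $\lambda\neq(1,0)$, so $\pi$ is invertible. Then $\varphi_Z=\pi\bar\varphi\pi^{-1}$, hence $\mathrm{rank}\,\varphi_Z=\mathrm{rank}\,\bar\varphi=:r$; on the other hand $\mathrm{im}\,\varphi_Z=\beta\bigl(\Lambda^2(\mathrm{im}\,\bar\varphi)\bigr)$ has dimension at most $\binom{r}{2}$, so $r\leq\binom{r}{2}$ and therefore $r\in\{0,3,4\}$. To exclude $r=3$, pick $0\neq\ell\in\ker\bar\varphi$; for every $v\in V$, $\varphi_Z([v,\ell])=\beta(\bar\varphi v\wedge\bar\varphi\ell)=0$, so the image of the linear map $V\to W,\ v\mapsto[v,\ell]$ lies in $\ker\varphi_Z$, which has dimension $4-r=1$; hence $\{v\in V:[v,\ell]=0\}$ has dimension at least $3$. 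But writing $\ell=x_1\bar a_1+x_2\bar a_2+y_1\bar b_1+y_2\bar b_2$, a direct inspection of the commutator relations shows that $[\bar a_1,\ell],[\bar a_2,\ell]$ lie in complementary $2$-dimensional subspaces of $W$ and are both nonzero when $(y_1,y_2)\neq(0,0)$, while $[\bar b_1,\ell],[\bar b_2,\ell]$ do the same when $(y_1,y_2)=(0,0)$; so $v\mapsto[v,\ell]$ always has rank at least $2$, whence $\{v:[v,\ell]=0\}$ has dimension at most $2$ — a contradiction. Therefore $r\in\{0,4\}$, every non-automorphism of $G$ has image in $\zeta G$, and the Proposition of Section~2 gives end-commutativity of $G_\lambda^{(p)}$ for all $p\geq3$ and $\lambda\neq(1,0)$.

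For $G_{(0,1)}^{(2)}$ the squaring map is not a homomorphism; it induces instead a quadratic map $q\colon V\to W$ whose polarisation is $\beta$, and the defining relations become $q\circ\bar\varphi=\varphi_Z\circ q$ plus the commutator conditions above. The reduction to $\mathrm{rank}\,\bar\varphi\in\{0,4\}$, the relation $\varphi_Z\circ\beta=\beta\circ\Lambda^2\bar\varphi$, and the estimate $\dim\{v:[v,\ell]=0\}\leq2$ are all characteristic-free; the one ingredient lost is $\mathrm{rank}\,\varphi_Z=\mathrm{rank}\,\bar\varphi$ (indeed $q$ need not be injective), and I would recover the conclusion by the same ideas together with a short finite check over $\mathbb{F}_2$ ruling out $\bar\varphi$ of rank $1,2,3$ — e.g.\ using that $q$ vanishes only at $0$, that no two-dimensional subspace of $V$ is sent by $q$ into a line, and, for rank $3$, the radical bound above. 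Since the groups $G_\lambda^{(p)}$ are nonabelian of class $2$, the last sentence of the statement follows. \emph{The main obstacle} I expect is the analysis of the putative rank-$3$ endomorphisms — ranks $1$ and $2$ being eliminated cheaply by $r\leq\binom{r}{2}$ — compounded in characteristic $2$ by the loss of linearity of the power map.
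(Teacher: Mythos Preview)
Your argument for odd $p$ is correct and takes a genuinely different route from the paper's. Both proofs rest on the same structural fact --- the $p$-th power map $\pi\colon V\to W$ is invertible precisely when $\lambda_2\neq 0$ (this is the Lemma of Section~3 in the paper) --- but use it very differently. The paper proceeds by an explicit element-level chase: starting from a single non-central element with central image, it alternates between ``take commutators with generators'' (implications $(k_1,k_2,l_1,l_2)\mapsto\zeta\Rightarrow(\ldots)\mapsto 1$) and ``take $p$-th roots'' (the inverse of $\pi$) to eventually force one of $a_1,a_2,b_1$ into the centre, in a case analysis on the exponents. You instead encode everything linearly and argue globally: the conjugacy $\varphi_Z=\pi\bar\varphi\pi^{-1}$ equates the ranks, while the compatibility $\varphi_Z\beta=\beta\Lambda^2\bar\varphi$ bounds $r$ by $\binom{r}{2}$, killing $r=1,2$ for free; the remaining case $r=3$ falls to the observation that for any nonzero $\ell\in V$ the map $v\mapsto\beta(v\wedge\ell)$ has rank at least~$2$ (a clean restatement of the fact, also exploited in the paper in a different section, that centralizers of non-central elements in $G_\lambda^{(p)}$ are small). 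Your approach is markedly shorter and more conceptual; the paper's has the virtue of being entirely elementary and self-contained without any exterior-algebra bookkeeping.

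For $G_{(0,1)}^{(2)}$ you do not give a complete argument, only a sketch of how one might adapt the method and an appeal to a finite check over $\mathbb{F}_2$. This is not a defect relative to the paper, which likewise disposes of this case by a GAP computation (after reducing to the $15^4$ normalized maps on generators). Your remark that $q$ vanishes only at $0$ is correct and is the exact analogue of the lemma for odd $p$; if you want to complete the $p=2$ case by hand along your lines, note that $r\le\binom{r}{2}$ and the rank-$3$ centralizer bound are indeed characteristic-free, so the only genuinely new work is to rule out $r=3$ without the equality $\operatorname{rank}\varphi_Z=r$ --- here one still has $\operatorname{rank}\varphi_Z\le 3$ from $\beta\Lambda^2\bar\varphi$, but the lower bound requires the quadratic relation $q\circ\bar\varphi=\varphi_Z\circ q$ and the injectivity of $q$, which together give $\ker\varphi_Z\subseteq q(\ker\bar\varphi)$ only set-theoretically, not linearly; a short direct check (at most a handful of cases for the line $\ker\bar\varphi$) is still needed.
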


On the other hand, it is not difficult to show:

\begin{theorem}\label{jkTheo2}
The JK-groups $G_{(1,0)}^{(p)}$ for all $p$ as well as $G_{(1,1)}^{(2)}$ are not end-commutative. In particular, end-commutativity is a strictly stronger property than aut-abelianity.
\end{theorem}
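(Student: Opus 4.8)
The plan is to apply Proposition~\ref{endCommProp} in the contrapositive direction. Jonah and Konvisser~\cite{JK75a} already showed that every $G_\lambda^{(p)}$ is aut-abelian, so to see that a particular one is not end-commutative it suffices to produce a single endomorphism $\varphi$ whose image is not contained in $\zeta G_\lambda^{(p)}$, together with an endomorphism that $\varphi$ fails to commute with; for the latter a central automorphism of the form $f^{\ast}$ will do. Concretely, I would look for a non-central element $w\in G_\lambda^{(p)}$ with $w^p=1$ and put $\varphi(a_1)=w$, $\varphi(a_2)=\varphi(b_1)=\varphi(b_2)=1$. Substituting these images into the six defining relations from Definition~\ref{jkDef}, each relation becomes a trivial identity except $a_1^p=[a_1,b_1]$, which becomes $w^p=[w,1]=1$; hence this assignment extends to an endomorphism exactly when $w^p=1$. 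Granting that, $\varphi$ sends every generator $[a_i,b_j]$ of $(G_\lambda^{(p)})'=\zeta G_\lambda^{(p)}$ to $[\varphi(a_i),\varphi(b_j)]=1$, so $\varphi$ kills the centre, while its image $\langle w\rangle$ is non-central because $w$ has nonzero image in the $\mathbb{F}_p$-space $G_\lambda^{(p)}/\zeta G_\lambda^{(p)}$, on which $\bar a_1,\bar a_2,\bar b_1,\bar b_2$ is a basis.

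Finding $w$ is easy for $G_{(1,0)}^{(p)}$: here $\lambda=(1,0)$ forces $a_1^p=[a_1,b_1]=a_2^p$, and since $[a_1,a_2]=1$ the element $w:=a_1a_2^{-1}$ satisfies $w^p=a_1^pa_2^{-p}=1$ for every prime $p$, and is visibly non-central. For $G_{(1,1)}^{(2)}$ no single relation produces such a $w$, but a short computation --- using that $G_{(1,1)}^{(2)}$ has class $2$ together with $a_1^2=[a_1,b_1]$, $a_2^2=[a_1,b_1][a_1,b_2]$, $b_2^2=[a_2,b_2]$ and $[a_1,a_2]=1$ --- shows that $w:=a_1a_2b_2$ satisfies $w^2=1$, and this $w$ is again non-central. (The existence of such a $w$ is equivalent to $G_\lambda^{(p)}$ having a non-central element of order $p$: for odd $p$ this is controlled by the kernel of the linear map $x\mapsto x^p$ on $G_\lambda^{(p)}/\zeta G_\lambda^{(p)}$, and for $p=2$ by the zero set of the corresponding quadratic map; by Theorem~\ref{jkTheo1} the other $G_\lambda^{(p)}$ have no such element.)

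To conclude, I would choose a homomorphism $f\colon G_\lambda^{(p)}\to\zeta G_\lambda^{(p)}$ with $f(w)\neq 1$; this is possible since $f$ automatically factors through $G_\lambda^{(p)}/\zeta G_\lambda^{(p)}$, where $w$ has nonzero image, so for instance $f(a_1)=[a_1,b_1]$, $f(a_2)=f(b_1)=f(b_2)=1$ works in both cases (as $w$ involves $a_1$ with exponent $1$). Then $\alpha:=f^{\ast}$ is an automorphism, since $f$ vanishes on $\zeta G_\lambda^{(p)}$ and so inverts only the identity. Finally $(\varphi\circ\alpha)(a_1)=\varphi(a_1f(a_1))=\varphi(a_1)=w$ (as $\varphi$ kills the centre), while $(\alpha\circ\varphi)(a_1)=\alpha(w)=w\cdot f(w)\neq w$; hence $\varphi$ and $\alpha$ do not commute, so $G_{(1,0)}^{(p)}$ for every $p$ and $G_{(1,1)}^{(2)}$ are not end-commutative. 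The \enquote{in particular} is then immediate: these are aut-abelian groups that are not end-commutative, so together with Theorem~\ref{jkTheo1} end-commutativity is a group property strictly stronger than aut-abelianity. I expect the only step needing genuine work to be the identification of $w$ for $G_{(1,1)}^{(2)}$; everything else is a routine check.
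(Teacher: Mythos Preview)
Your proof is correct and follows essentially the same strategy as the paper: exhibit a non-central element $w$ of order $p$ (namely $a_1a_2^{-1}$ for $\lambda=(1,0)$ and $a_1a_2b_2$ for $\lambda=(1,1),p=2$), use it to build an endomorphism $\varphi$ with non-central image that annihilates the centre, and pair it with a central automorphism $f^{\ast}$ that moves $w$. Your choice of $\varphi$ (sending only $a_1$ to $w$ and killing the other generators) is slightly simpler than the paper's (which sends all four generators to $w$), and your $f(a_1)=[a_1,b_1]$ is a cleaner choice than the paper's $f(a_1)=[a_1,a_2]$, but the underlying idea is identical.
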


\begin{proof}[Proof of Theorem \ref{jkTheo2}.]
The counter-examples for commutativity of the endomorphism monoid for this proof were found using GAP \cite{GAP4}. For the $G_{(1,0)}^{(p)}$, consider the endomorphism $\varphi$ which sends each of the generators $a_1,a_2,b_1,b_2$ to the element $a_1a_2^{-1}$; it is not difficult to see by the defining relations that this really defines an endomorphism of $G_{(1,0)}^{(p)}$. Also, consider the automorphism $f^{\ast}$, where $f$ sends $a_1$ to $[a_1,a_2]$ and all other generators to $1$. Then $$(\varphi\circ f^{\ast})(a_1)=\varphi(a_1\cdot [a_1,a_2])=a_1a_2^{-1}\cdot [a_1a_2^{-1},a_1a_2^{-1}]=a_1a_2^{-1},$$ whereas $$(f^{\ast}\circ\varphi)(a_1)=\varphi(a_1)\cdot f(\varphi(a_1))=a_1a_2^{-1}\cdot f(a_1)f(a_2)^{-1}=a_1a_2^{-1}\cdot [a_1,a_2].$$

For $G_{(1,1)}^{(2)}$, consider the endomorphism $\varphi$ sending each generator to $a_1a_2b_2$; again, it is easy to check that this really defines an endomorphism. Also, consider, again, the automorphism $f^{\ast}$ where $f$ sends $a_1$ to $[a_1,a_2]$ and the three other generators to $1$. Then $$(\varphi\circ f^{\ast})(a_1)=\varphi(a_1[a_1,a_2])=a_1a_2b_2,$$ whereas $$(f^{\ast}\circ\varphi)(a_1)=\varphi(a_1)\cdot f(\varphi(a_1))=a_1a_2b_2\cdot [a_1,a_2].$$
\end{proof}

\begin{remmark}\label{firstRem}
Of course, Theorem \ref{jkTheo2} and its proof reveal almost nothing about the structure of the endomorphism monoid in those exceptional cases. We postpone an analysis to the final section.
\end{remmark}

Let us now prove the more difficult Theorem \ref{jkTheo1}. We begin with a lemma:

\begin{lemmma}\label{jkLem}
Let $p$ be an odd prime, $\lambda\in\{(0,1),(1,1),\ldots,(p-1,1)\}$. Then all elements of $G_{\lambda}^{(p)}$ that are outside the center have order $p^2$.
\end{lemmma}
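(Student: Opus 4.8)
The plan is to compute the $p$-th power of an arbitrary element directly from the normal form \eqref{normalForm} and the defining relations, exploiting that $p$ is odd. Since $G_{\lambda}^{(p)}$ has nilpotency class $2$ and its derived subgroup $\zeta G_{\lambda}^{(p)}=(G_{\lambda}^{(p)})'$ is elementary abelian, the usual class-$2$ power formula gives, for a product $w=a_1^{k_1}a_2^{k_2}b_1^{l_1}b_2^{l_2}$ of the non-central generators, that $w^p$ equals $a_1^{pk_1}a_2^{pk_2}b_1^{pl_1}b_2^{pl_2}$ times a product of commutators each raised to the power $\binom{p}{2}$; as $p$ is odd, $\binom{p}{2}$ is a multiple of $p$ and all these correction terms vanish. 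Since the purely central part of any $g\in G_{\lambda}^{(p)}$ has order dividing $p$, it follows that $g^p=a_1^{pk_1}a_2^{pk_2}b_1^{pl_1}b_2^{pl_2}$ for every $g$ written in the form \eqref{normalForm}.

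Next I would substitute the defining relations $a_1^p=[a_1,b_1]$, $a_2^p=[a_1,b_1]^{\lambda_1}[a_1,b_2]^{\lambda_2}$, $b_1^p=[a_2,b_1][a_2,b_2]$, $b_2^p=[a_2,b_2]$ into this expression and collect exponents with respect to the $\mathbb{F}_p$-basis $[a_1,b_1],[a_1,b_2],[a_2,b_1],[a_2,b_2]$ of $\zeta G_{\lambda}^{(p)}$, obtaining
$$g^p=[a_1,b_1]^{k_1+\lambda_1 k_2}[a_1,b_2]^{\lambda_2 k_2}[a_2,b_1]^{l_1}[a_2,b_2]^{l_1+l_2}.$$
In particular $g^p$ is always central, so $g^{p^2}=(g^p)^p=1$ and every element of $G_{\lambda}^{(p)}$ has order dividing $p^2$. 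It therefore suffices to show that $g^p=1$ forces $g\in\zeta G_{\lambda}^{(p)}$, i.e.\ $k_1=k_2=l_1=l_2=0$.

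Reading off coordinates, $g^p=1$ is equivalent to the congruences $l_1\equiv 0$, $l_1+l_2\equiv 0$, $\lambda_2 k_2\equiv 0$ and $k_1+\lambda_1 k_2\equiv 0\pmod p$. The first two give $l_1=l_2=0$. Here is where the hypothesis on $\lambda$ enters: for every $\lambda$ in the list $\{(0,1),(1,1),\ldots,(p-1,1)\}$ one has $\lambda_2=1$, so the third congruence forces $k_2=0$ and then the fourth forces $k_1=0$; thus $g$ lies in the center. Conversely, any non-central $g$ then satisfies $g^p\neq 1$ while $g^{p^2}=1$, so it has order exactly $p^2$, which completes the proof.

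There is no serious obstacle here; the only points requiring care are the justification of the class-$2$ power formula and the vanishing of the $\binom{p}{2}$-correction terms (exactly where oddness of $p$ is used), together with the bookkeeping of signs and exponents when expanding $g^p$ in the central basis. It is worth noting that the argument also pinpoints why $\lambda=(1,0)$ is excluded: there $\lambda_2=0$, the congruence $\lambda_2 k_2\equiv 0$ becomes vacuous, and indeed $a_1a_2^{-1}$ --- the very element appearing in the proof of Theorem \ref{jkTheo2} --- is then a non-central element of order $p$.
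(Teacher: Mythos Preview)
Your proof is correct and follows essentially the same route as the paper: both compute $g^p$ via the class-$2$ power identity, use that $\binom{p}{2}=\Delta_{p-1}$ is divisible by the odd prime $p$ to kill the commutator corrections, arrive at the identical expression $[a_1,b_1]^{k_1+\lambda_1 k_2}[a_1,b_2]^{\lambda_2 k_2}[a_2,b_1]^{l_1}[a_2,b_2]^{l_1+l_2}$, and conclude by noting $\lambda_2\neq 0$. The paper phrases the power computation as an explicit induction on $n$ with triangle-number exponents rather than invoking the general class-$2$ formula, but the content is the same.
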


\begin{proof}
Note that an element in normal form (\ref{normalForm}) has order $p$ if and only if the element $a_1^{k_1}a_2^{k_2}b_1^{l_1}b_2^{l_2}$ has order $p$. Also, by an easy induction on $n$, in all JK-groups and for all $k_1,k_2,l_1,l_2\in\{0,\ldots,p-1\}$, the $n$-th power $(a_1^{k_1}a_2^{k_2}b_1^{l_1}b_2^{l_2})^n$ has normal form $$a_1^{nk_1}a_2^{nk_2}b_1^{nl_1}b_2^{nl_2}[a_1,b_1]^{-\Delta_{n-1}k_1l_1}[a_1,b_2]^{-\Delta_{n-1}k_1l_2}[a_2,b_1]^{-\Delta_{n-1}k_2l_1}[a_2,b_2]^{-\Delta_{n-1}k_2l_2},$$ where $\Delta_k$ denotes the $k$-th triangle number and, of course, all exponents are understood to be reduced modulo $p$. Since $p$ is odd, it follows from this that $$(a_1^{k_1}a_2^{k_2}b_1^{l_1}b_2^{l_2})^p=[a_1,b_1]^{k_1+\lambda_1\cdot k_2}[a_1,b_2]^{\lambda_2\cdot k_2}[a_2,b_1]^{l_1}[a_2,b_2]^{l_1+l_2}.$$ Since $\lambda_2\not=0$, assuming that all exponents on the RHS are equal to $0$, we can conclude $k_1=k_2=l_1=l_2=0$.
\end{proof}

\begin{proof}[Proof of Theorem \ref{jkTheo1}]
For $G_{(0,1)}^{(2)}$, we checked this (or more precisely, whether all endomorphisms which are not automorphisms map into the center) with the help of GAP. The following concept helps keep the computational effort low: If $G$ is any group and $\varphi,\psi$ are endomorphisms of $G$ such that $\psi$ maps into $\zeta G$, then the sum $\varphi+\psi$ mapping $g\mapsto \varphi(g)\psi(g)$, is also an endomorphism of $G$. For endomorphisms of $G$, arising from each other by addition of an endomorphism mapping into the center of $G$ is an equivalence relation whose equivalence classes will be referred to as \textit{normalization classes}, and after fixing a set of representatives for the normalization classes, its elements will be referred to as the \textit{normalized endomorphisms} of $G$. Note that mapping into the center is a property of endomorphisms well-defined on normalization classes. Now in a JK-group, since the center and central quotient both are elementary abelian of order $p^4$, any function from $\{a_1,a_2,b_1,b_2\}$ into its center extends to an endomorphism of the group. Hence a set of normalized endomorphisms can be obtained by extending all functions from $\{a_1,a_2,b_1,b_2\}$ into any set of coset representatives of the center of $G_{(0,1)}^{(2)}$ (like the set of elements whose normal forms satisfy $r_i=0$ for $i=1,2,3,4$) to endomorphisms. Actually, since it is easy to see by the defining relations that any endomorphism of $G_{(0,1)}^{(2)}$ mapping at least one of the generators $a_1,a_2,b_1,b_2$ into the center maps all of them into it, one only needs to check for the $15^4=50625$ maps $\{a_1,a_2,b_1,b_2\}\rightarrow\{a_1^{k_1}a_2^{k_2}b_1^{l_1}b_2^{l_2}\mid(k_1,k_2,l_1,l_2)\in\{0,1\}^4\setminus\{(0,0,0,0)\}\}$ whether the map sending each generator to itself is the only one among them extending to an endomorphism of $G$.

For odd primes $p$, the following argument works: The (linear) map $$(\mathbb{Z}/p\mathbb{Z})^4\rightarrow(\mathbb{Z}/p\mathbb{Z})^4,(k_1,k_2,l_1,l_2)\mapsto (k_1+\lambda_1k_2,\lambda_2k_2,l_1,l_1+l_2),$$ is a bijection. In other words: Every element of $\zeta G_{\lambda}^{(p)}$ has exactly one $p$-th root of the form $a_1^{k_1}a_2^{k_2}b_1^{l_1}b_2^{l_2}$. Now if $\varphi$ is an endomorphism of $G_{\lambda}^{(p)}$ with nontrivial kernel, this implies that $\varphi$ sends at least one element of $G_{\lambda}^{(p)}\setminus \zeta G_{\lambda}^{(p)}$ into $\zeta G_{\lambda}^{(p)}$: Let $x\in\mathrm{ker}(\varphi)\setminus\{1\}$. If $x\notin \zeta G_{\lambda}^{(p)}$, $x$ is our example, and else, let $y\in G_{\lambda}^{(p)}\setminus \zeta G_{\lambda}^{(p)}$ with $y^p=x$; then, since $\varphi(y)$ has order at most $p$, $\varphi(y)\in\zeta G_{\lambda}^{(p)}$ by the lemma.

So $\varphi$ sends some element $g\in G_{\lambda}^{(p)}$ of the form $$a_1^{k_1}a_2^{k_2}b_1^{l_1}b_2^{l_2},\hspace{3pt}(k_1,k_2,l_1,l_2)\in\{0,\ldots,p-1\}^4\setminus\{(0,0,0,0)\},$$ into $\zeta G_{\lambda}^{(p)}$. Note that for showing $\mathrm{im}(\varphi)\subseteq\zeta G_{\lambda}^{(p)}$, by the defining relations it suffices to show that at least one of $a_1,a_2,b_1$ is mapped into the center. Also, we use the fact that for any element $h\in G_{\lambda}^{(p)}$, $\varphi(h)\in\zeta G_{\lambda}^{(p)}$ implies that all commutators of the form $[k,h]$ with $k\in G_{\lambda}^{(p)}$ are in $\mathrm{ker}(\varphi)$.

We now agree on the following notational convention: $$(k_1,k_2,l_1,l_2)\mapsto\zeta$$ is an abbreviation for $$\varphi(a_1^{k_1}a_2^{k_2}b_1^{l_1}b_2^{l_2})\in\zeta G_{\lambda}^{(p)},$$ and $$(K_1,K_2,L_1,L_2)\mapsto 1$$ for $$[a_1,b_1]^{K_1}[a_1,b_2]^{K_2}[a_2,b_1]^{L_1}[a_2,b_2]^{L_2}\in\mathrm{ker}(\varphi).$$ The observation from the end of the last paragraph then gives us the following implications: If $$(k_1,k_2,l_1,l_2)\mapsto\zeta,$$ we can conclude (by \enquote{taking brackets} with the generators) that $$(l_1,l_2,0,0)\mapsto 1,(0,0,l_1,l_2)\mapsto 1,(k_1,0,k_2,0)\mapsto 1\hspace{3pt}\text{and}\hspace{3pt}(0,k_1,0,k_2)\mapsto 1.$$ Also, the observation that an element is mapped into the center if and only if its $p$-th power is in the kernel of $\varphi$ translates as $$(k_1,k_2,l_1,l_2)\mapsto\zeta \Leftrightarrow (k_1+\lambda_1k_2,\lambda_2k_2,l_1,l_1+l_2)\mapsto 1,$$ or $$(K_1,K_2,L_1,L_2)\mapsto 1 \Leftrightarrow (K_1-\lambda_1\lambda_2^{-1}K_2,\lambda_2^{-1}K_2,L_1,L_2-L_1)\mapsto\zeta,$$ where $\lambda_2^{-1}$ denotes the unique multiplicative inverse modulo $p$ of $\lambda_2$ in $\{1,\ldots,p-1\}$.

We now make a case distinction according to the values of the exponents $k_1,k_2,l_1,l_2$ occurring in the normal form of $g$. First, assume that $l_1\not=0$. Since we have $$(0,0,l_1,l_2)\mapsto 1,$$ and this gives us $$(0,0,l_1,l_2-l_1)\mapsto\zeta,$$ which by taking brackets with $a_2$ yields $$(0,0,l_1,l_2-l_1)\mapsto 1,$$ we see that by iteration, we get $$(0,0,l_1,l)\mapsto 1$$ for all $l\in\{0,\ldots,p-1\}$, in particular $$(0,0,l_1,0)\mapsto 1,$$ which because of $l_1\not=0$ implies $$(0,0,1,0)\mapsto 1,$$ that is, $$(0,0,1,-1)\mapsto\zeta,$$ or explicitly, $$\varphi(b_1b_2^{-1})\in\zeta G_{\lambda}^{(p)}.$$ But also, by an appropriate subtraction among the $$(0,0,l_1,l)\mapsto 1,$$ we see that we can derive $$(0,0,0,1)\mapsto 1,$$ which yields $$(0,0,0,1)\mapsto\zeta,$$ or explicitly, $$\varphi(b_2)\in\zeta G_{\lambda}^{(p)}.$$ Combining the two results, we get that $$\varphi(b_1)\in\zeta G_{\lambda}^{(p)},$$ and are done in this case.

Now assume $l_1=0$. The subcase $k_2\not=0$ reduces to the first case, since $$(k_1,0,k_2,0)\mapsto 1$$ yields $$(k_1,0,k_2,-k_2)\mapsto\zeta.$$ Hence we can assume that $l_1=k_2=0$. But then the same argument yields $$(k_1,0,0,0)\mapsto\zeta,$$ which in case $k_1\not=0$ implies $$(1,0,0,0)\mapsto\zeta.$$ Hence we can assume $k_1=k_2=l_1=0$ and $l_2\not=0$. In this case, $$(l_1,l_2,0,0)\mapsto 1$$ simplifies to $$(0,l_2,0,0)\mapsto 1,$$ which yields $$(-\lambda_1\lambda_2^{-1}l_2,\lambda_2^{-1}l_2,0,0)\mapsto\zeta$$ and hence reduces the situation to the case $l_1=0,k_2\not=0$ dealt with before.
\end{proof}

\begin{remmark}\label{lastRem}
Note that we not only have shown in these cases that $\mathrm{End}(G_{\lambda}^{(p)})$ is commutative, but we also fully understand its structure: It consists of $2\cdot p^{16}$ elements, $p^{16}$ of which are automorphisms and $p^{16}$ homomorphisms into the center; the latter can be described in terms of the images of the four generators $a_1,a_2,b_1,b_2$ by $(4\times 4)$-matrices over $\mathbb{F}_p$. Also, it is not difficult to check for all JK-groups validity of the identity $f^{\ast}\circ g^{\ast}=(f+g)^{\ast}$ for all $f,g:G_{\lambda}^{(p)}\rightarrow \zeta G_{\lambda}^{(p)}$, from which it immediately follows that $\mathrm{Aut}(G_{\lambda}^{(p)})$ is elementary abelian of order $p^{16}$, as Jonah and Konvisser have already observed. On the other hand, for such $f,g$, we also have $f^{\ast}\circ g=g=g\circ f^{\ast}$ because of $f\circ g=0=g\circ f$. These observations show that in the cases considered in Theorem \ref{jkTheo1}, $\mathrm{End}(G_{\lambda}^{(p)})$ is isomorphic to $\mathrm{Mat}_{4,4}(\mathbb{F}_p)\tensor[_\varphi]{\bowtie}{_\varphi}\{0,1\}$, where $\varphi$ is the restriction of the usual action of the multiplicative monoid $\mathbb{F}_p$ on the additive monoid $\mathrm{Mat}_{4,4}(\mathbb{F}_p)$ (observe that this is both an action on the left and on the right because of commutativity).
\end{remmark}

\section{Some Necessary Conditions}

In this section, we show some results on how having commutative endomorphism monoid restricts the structure of finite $p$-groups. This will enable us to see \enquote{at first sight} for many examples with abelian automorphism group that their endomorphism monoid is not commutative.

\begin{propposition}\label{falsifyProp}
Let $p$ be a prime, $G$ a finite $p$-group, $e:=\mathrm{log}_p(\mathrm{exp}(G/G'))$.

(1) Assume $\mathrm{Aut}(G)\subseteq\zeta\mathrm{End}(G)$ (a property implied by end-commutativity). Then for all endomorphisms $\varphi$ of $G$, $\mathrm{im}(\varphi)$ is a characteristic subgroup of $G$. In particular, all cyclic subgroups of order a divisor of $p^e$ in $G$ are characteristic, and $\Omega_1(G)\leq\zeta G$.

(2) Assume $\mathrm{End}(G)$ is commutative. Then for all endomorphisms $\varphi$ of $G$, $\mathrm{im}(\varphi)$ is a fully-invariant subgroup of $G$. In particular, all cyclic subgroups of order a divisor of $p^e$ in $G$ are fully-invariant.

(3) If $\mathrm{Aut}(G)\subseteq\zeta\mathrm{End}(G)$, then $G$ cannot be written as a semidirect product in a nontrivial way. In particular, all endomorphisms $\varphi$ of $G$ are either automorphisms or nilpotent (meaning that some power $\varphi^n$ for $n$ a positive integer is the trivial endomorphism of $G$).
\end{propposition}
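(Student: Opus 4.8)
The three parts all rest on one elementary observation: if endomorphisms $\varphi,\psi$ of $G$ commute, then $\psi(\mathrm{im}(\varphi))=\psi(\varphi(G))=\varphi(\psi(G))\subseteq\varphi(G)=\mathrm{im}(\varphi)$, with equality whenever $\psi$ is bijective. So for (1) I would let $\psi$ range over $\mathrm{Aut}(G)$, which by hypothesis commutes with every $\varphi$: this shows $\mathrm{im}(\varphi)$ is carried onto itself by every automorphism, i.e.\ is characteristic. For (2) I would let $\psi$ instead range over all of $\mathrm{End}(G)$, giving that $\mathrm{im}(\varphi)$ is fully-invariant. The ``in particular'' clauses of (1) and (2) then follow once one exhibits, for each $x\in G$ of order dividing $p^e$, an endomorphism of $G$ with image exactly $\langle x\rangle$: take the composite $G\twoheadrightarrow G/G'\twoheadrightarrow\mathbb{Z}/p^e\mathbb{Z}\twoheadrightarrow\langle x\rangle\hookrightarrow G$, where the middle surjection is the projection of the finite abelian $p$-group $G/G'$ (which has exponent $p^e$) onto a cyclic direct summand of maximal order $p^e$, and the last arrow sends a generator to $x$ (legitimate since $p^e$ annihilates $x$). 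Finally, for $\Omega_1(G)\le\zeta G$ in (1): any element of order dividing $p$ generates a characteristic subgroup of order dividing $p\mid p^e$ (note $e\ge1$ when $G\ne1$, as a nontrivial nilpotent group satisfies $G\ne G'$), and a normal subgroup of order $p$ in a $p$-group is central because $G$ acts on it through the $p'$-group $\mathrm{Aut}(\mathbb{Z}/p\mathbb{Z})$; since $\Omega_1(G)$ is generated by such elements, it lies in $\zeta G$.

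For (3) the plan is to observe first that $G$ is a nontrivial semidirect product precisely when $\mathrm{End}(G)$ contains an idempotent $e$ distinct from $\mathrm{id}_G$ and from the trivial endomorphism $G\to1$: a decomposition $G=N\rtimes H$ with $N,H\ne1$ furnishes the idempotent retraction $G\to H\hookrightarrow G$, and conversely an idempotent $e$ yields $G=\ker(e)\rtimes\mathrm{im}(e)$. So I would assume for contradiction that such an $e$ exists, put $N:=\ker(e)$ and $H:=\mathrm{im}(e)$ (both nontrivial), and build an automorphism that fails to commute with $e$. Since $G$ is a nontrivial finite $p$-group, hence nilpotent, the normal subgroup $N$ meets $\zeta G$ nontrivially, so fix $z\in N\cap\zeta G$ of order $p$. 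As $H\ne1$ there is a surjection $\chi\colon H\to\langle z\rangle$ (factor $H$ through $H/\Phi(H)$); set $f:=\chi\circ e\colon G\to\langle z\rangle\le\zeta G$, a homomorphism into the centre. By the preliminaries $f^{\ast}\colon x\mapsto xf(x)$ is an endomorphism of $G$, and it has trivial kernel — hence is an automorphism, $G$ being finite — because $f(x)=x^{-1}$ forces $x\in\langle z\rangle\subseteq N$, whence $f(x)=1$ and $x=1$. Now $\mathrm{Aut}(G)\subseteq\zeta\mathrm{End}(G)$ forces $f^{\ast}\circ e=e\circ f^{\ast}$; but since $e$ restricts to the identity on $H$, choosing $h\in H$ with $f(h)=z$ gives $(e\circ f^{\ast})(h)=e(hf(h))=h\cdot e(z)=h$ (as $z\in N=\ker e$), while $(f^{\ast}\circ e)(h)=f^{\ast}(h)=hz\ne h$, a contradiction. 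This proves the non-decomposability statement.

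The ``in particular'' of (3) is then purely formal: $\mathrm{End}(G)$ is finite, so the subsemigroup generated by an arbitrary $\varphi\in\mathrm{End}(G)$ is finite and therefore contains an idempotent, necessarily of the form $\varphi^n$ for some $n\ge1$; by the above $\varphi^n$ equals either $\mathrm{id}_G$ or the trivial endomorphism, and $\varphi^n=\mathrm{id}_G$ makes $\varphi$ invertible whereas $\varphi^n$ trivial makes $\varphi$ nilpotent.

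I expect essentially all of the genuine work to be concentrated in part (3), and specifically in setting up the central ``shear'' automorphism $f^{\ast}$ so that it is simultaneously well-defined and injective (hence an automorphism) and demonstrably fails to commute with the putative idempotent $e$; once that computation is arranged, the rest is bookkeeping. Parts (1) and (2), together with the trick of realising small cyclic subgroups as images of endomorphisms, should be routine. Minor points I would keep an eye on are the degenerate case $G=1$ (trivially excluded) and the inequality $e\ge1$ used in the proof of $\Omega_1(G)\le\zeta G$.
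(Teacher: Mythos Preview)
Your proof is correct, and the skeleton matches the paper's: commuting endomorphisms preserve each other's images; cyclic subgroups of order dividing $p^e$ are realised as images via projection through $G/G'$ onto a maximal cyclic summand; and (3) hinges on the same ``central shear'' automorphism $f^{\ast}$ built from a map $G\to H\to\mathbb{Z}/p\mathbb{Z}\hookrightarrow N\cap\zeta G$. The differences are in packaging, and in each case your version is a little more streamlined. For $\Omega_1(G)\le\zeta G$ the paper argues via aut-abelianity (every automorphism is central, so $\alpha(g)=g^k$ forces $g^{k-1}\in\zeta G$, and one finishes by a case distinction on whether some $\alpha$ moves $g$), whereas your route --- characteristic $\Rightarrow$ normal, and a normal subgroup of order $p$ in a $p$-group is automatically central --- is shorter and does not invoke centrality of automorphisms. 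In (3) the paper first applies (1) to see that $H$ is characteristic, upgrades $G=N\rtimes H$ to $N\times H$, and then derives the contradiction from $f^{\ast}(H)\not\subseteq H$; you bypass the direct-product step and exhibit $e\circ f^{\ast}\ne f^{\ast}\circ e$ on a single element of $H$, which contradicts $\mathrm{Aut}(G)\subseteq\zeta\mathrm{End}(G)$ directly. Finally, for the ``in particular'' of (3) the paper invokes the Fitting-type decomposition $G=\mathrm{nil}(\varphi)\rtimes\mathrm{per}(\varphi)$ (citing an external preprint), while your argument --- any element of a finite semigroup has an idempotent power, and you have just shown the only idempotents in $\mathrm{End}(G)$ are $\mathrm{id}_G$ and $0$ --- is self-contained and meshes nicely with the idempotent framing you already set up for the main claim.
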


\begin{proof}
For (1): Let $\varphi$ be any endomorphism and $\alpha$ any automorphism of $G$. Take any element from $\mathrm{im}(\varphi)$, say $\varphi(g)$. Then $\alpha(\varphi(g))=\varphi(\alpha(g))\in\mathrm{im}(\varphi)$, which shows that $\mathrm{im}(\varphi)$ is characteristic. For the \enquote{In particular}, fix any cyclic subgroup $C$ of $G$ of order a divisor of $p^e$ and note that by the structure theorem for finite abelian groups, $G/G'$ is a product of primary cyclic groups of order a power of $p$, and the maximum of the orders of these factors is $p^e$. We consider the projection $\pi$ down onto one of these largest factors, and any homomorphism $\chi$ from that factor into $G$ sending the generator of the factor to a generator of $C$. Denoting by $\pi'$ the canonical projection $G\rightarrow G/G'$, the composition $\chi\circ\pi\circ\pi'$ then is an endomorphism of $G$ whose image is $C$, so that $C$ is characteristic. Finally, if $g$ is any element of order $p$ in $G$, then by what we just showed, $\langle g\rangle$ is characteristic, so if we fix any automorphism $\alpha$ of $G$, we will have $\alpha(g)=g^k$ for some $k\in\{1,\ldots,p-1\}$. However, by assumption, $\mathrm{Aut}(G)$ is abelian, so $\alpha$ is central, meaning that we must have $g^{k-1}\in\zeta G$. Hence if there is an automorphism of $G$ which does not fix $g$, then we can conclude $g\in\zeta G$, and otherwise, $g$ is in particular fixed by all inner automorphisms of $G$ and hence also an element of $\zeta G$.

For (2): This is analogous to (1).

For (3): Assume, for a contradiction, that $G=N\rtimes H$ for some nontrivial subgroups $N,H\leq G$. Then since $G/N\cong H$, there is an endomorphism $\varphi$ of $G$ with kernel $N$ and image $H$. Hence $H$ is characteristic in $G$, and $G=N\times H$. Now note that we have a sequence of groups $$G\twoheadrightarrow H\twoheadrightarrow H/H'\twoheadrightarrow \mathbb{Z}/p\mathbb{Z}\hookrightarrow \zeta N\hookrightarrow \zeta G,$$ and composition of all these morphisms yields a morphism $f:G\rightarrow\zeta G$. If $g=(n,h)\in G$ is inverted by $f$, because of $\mathrm{im}(f)\subseteq N$, it follows that $h=1$, but in view of the first morphism of the above chain, this implies $f(g)=1$, so $g=1$. Hence the associated central endomorphism $f^{\ast}$ is an automorphism of $G$, but it does not map $H$ into itself, a contradiction. For the \enquote{In particular}, observe that if $\varphi$ is an endomorphism of a finite group $G$, then setting $\mathrm{nil}(\varphi):=\{g\in G\mid \exists n\in\mathbb{N}:\varphi^n(g)=1_G\}$ and $\mathrm{per}(\varphi):=\{g\in G\mid \exists n\in\mathbb{N}^+:\varphi^n(g)=g\}$, we have $G=\mathrm{nil}(\varphi)\rtimes\mathrm{per}(\varphi)$ (see also the author's preprint \enquote{On the dynamics of endomorphisms of finite groups} on \url{http://arxiv.org/abs/1409.3756}), and $\varphi$ is nilpotent (resp.~an automorphism) if and only if $\mathrm{nil}(\varphi)=G$ (resp.~$\mathrm{per}(\varphi)=G$).
\end{proof}

\begin{remmark}\label{counterExRem}
The quaternion group $\mathrm{Q}_8$ has both the property that all images of its endomorphisms are fully-invariant subgroups and that it is not a semidirect product, but it does not even have abelian automorphism group. Hence not even the conjunction of these two conditions is sufficient to ensure aut-abelianity let alone end-commutativity.
\end{remmark}

\begin{corrollary}\label{curranCor}
Let $p$ be any prime, and let $\lambda\in\{(0,1),(1,1),\ldots,(p-1,1)\}$ with $\lambda\not=(1,1)$ for $p=2$. Furthermore, let $G$ be any aut-abelian special finite $p$-group. Then the direct product $G_{\lambda}^{(p)}\times G$ is aut-abelian.
\end{corrollary}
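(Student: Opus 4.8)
\textbf{Proof proposal for Corollary \ref{curranCor}.}

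The plan is to reduce the statement to the single assertion that every automorphism of $H:=G_{\lambda}^{(p)}\times G$ is central, and then to prove this by a block-matrix analysis of $\mathrm{End}(H)$ in which the remaining hard case is dispatched via Curran's theorem on direct products with abelian automorphism group. Write $A:=G_{\lambda}^{(p)}$ and $B:=G$. The first step is that $H$ is again a special $p$-group: since $A$ and $B$ are special, $\zeta H=\zeta A\times\zeta B=A'\times B'=H'$ is elementary abelian and equals $\Phi(A)\times\Phi(B)=\Phi(H)$. Hence, by the facts recalled in Section 2 (the ``if'' direction using that $H$ is special, so that any two homomorphisms $H\to\zeta H$ have trivial composite), $H$ is aut-abelian if and only if every automorphism of $H$ is central. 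I would also record the elementary fact that an endomorphism of a finite $p$-group is an automorphism if and only if it induces an automorphism of the Frattini quotient.

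The second step is the block description. Any $\varphi\in\mathrm{End}(H)$ has the form $\varphi(a,b)=(\alpha(a)\beta(b),\,\gamma(a)\delta(b))$ with $\alpha\in\mathrm{End}(A)$, $\delta\in\mathrm{End}(B)$, $\beta\in\mathrm{Hom}(B,A)$, $\gamma\in\mathrm{Hom}(A,B)$, subject to $[\mathrm{im}\,\alpha,\mathrm{im}\,\beta]=1$ and $[\mathrm{im}\,\gamma,\mathrm{im}\,\delta]=1$. Passing to $H/\Phi(H)=A/\zeta A\oplus B/\zeta B$ yields a linear map $\overline\varphi=\left(\begin{smallmatrix}\overline\alpha&\overline\beta\\\overline\gamma&\overline\delta\end{smallmatrix}\right)$, and by the first step $\varphi\in\mathrm{Aut}(H)$ iff $\overline\varphi$ is invertible, while $\varphi$ is central iff $\overline\varphi=\mathrm{id}$. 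Now the structural inputs bite: since $A=G_{\lambda}^{(p)}$ is end-commutative (Theorem \ref{jkTheo1}), $\alpha$ is either an automorphism --- so $\overline\alpha=\mathrm{id}$, as $A$ is aut-abelian --- or has image in $\zeta A$, so $\overline\alpha=0$; and since $G$ is aut-abelian, Proposition \ref{falsifyProp}(3) shows $\delta$ is either an automorphism ($\overline\delta=\mathrm{id}$, again since $G$ is aut-abelian) or nilpotent ($\overline\delta$ nilpotent). In the case $\overline\alpha=\mathrm{id}$, i.e.\ $\mathrm{im}\,\alpha=A$, the commuting condition forces $\mathrm{im}\,\beta\subseteq\zeta A$, hence $\overline\beta=0$; then invertibility of $\overline\varphi$ forces $\overline\delta$ invertible, hence $\overline\delta=\mathrm{id}$, whence $\mathrm{im}\,\gamma\subseteq\zeta B$ and $\overline\gamma=0$; so $\overline\varphi=\mathrm{id}$ and $\varphi$ is central, as desired.

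It remains to exclude the case $\overline\alpha=0$ (equivalently: $\overline\delta$ nilpotent and $\varphi\in\mathrm{Aut}(H)$) --- this is where I would invoke Curran's theorem. The relevant point is that $A$ and $B$ have no common direct factor: both are directly indecomposable (an aut-abelian group is not even a nontrivial semidirect product, Proposition \ref{falsifyProp}(3)), so a common factor would force $G\cong G_{\lambda}^{(p)}$, which must be excluded --- and is genuinely needed, since the coordinate swap is a non-central automorphism of $G_{\lambda}^{(p)}\times G_{\lambda}^{(p)}$. Under the ``no common direct factor'' hypothesis, Curran's analysis of $\mathrm{Aut}(A\times B)$ shows that, for $A$ and $B$ with the present structure (both special, both aut-abelian, and $A$ end-commutative), no automorphism of $H$ can have a non-invertible $A$-component: concretely, the non-degeneracy of the commutator pairing of the special group $B$, together with the Krull--Remak--Schmidt theorem applied to the two decompositions $H=(A\times 1)\times(1\times B)=\varphi(A\times 1)\times\varphi(1\times B)$, leaves no room for such a $\varphi$. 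With this case removed, every automorphism of $H$ is central, so $H$ is aut-abelian.

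I expect the main obstacle to be precisely this last step: marrying Curran's bookkeeping to the ``special $+$ aut-abelian $+$ end-commutative'' hypotheses so as to kill the mixed automorphisms. It is exactly here that one needs $A$ to be end-commutative rather than merely aut-abelian --- which is why the list excludes $G_{(1,0)}^{(p)}$ and $G_{(1,1)}^{(2)}$ --- and here that the ``no common direct factor'' proviso (hence $G\not\cong G_{\lambda}^{(p)}$), implicit in Curran's theorem, must be made explicit.
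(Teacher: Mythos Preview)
Your approach is considerably more elaborate than the paper's, and in the end circles back to exactly what the paper does. The paper's proof is two lines: $G_{\lambda}^{(p)}$ is end-commutative by Theorem~\ref{jkTheo1}, hence directly indecomposable by point~(3) of the Proposition, and then Curran's Theorem~3.1(i) from \cite{Cur06a} --- which says that for special aut-abelian finite $p$-groups $H,K$ with no common direct factor, $H\times K$ is aut-abelian --- finishes the job. Your block-matrix analysis of $\mathrm{End}(A\times B)$ and the reduction to the two cases $\overline{\alpha}=\mathrm{id}$ and $\overline{\alpha}=0$ is precisely the scaffolding of Curran's proof; having redone that, you then defer the remaining case to ``Curran's analysis'' anyway. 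So nothing is gained over citing Curran as a black box from the start.

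Two further points. First, your invocation of the Proposition's point~(3) to conclude that $G$ is directly indecomposable is not justified: that point assumes $\mathrm{Aut}(G)\subseteq\zeta\mathrm{End}(G)$, which is strictly stronger than aut-abelianity. Fortunately you do not need it: since $G_{\lambda}^{(p)}$ is directly indecomposable, ``no common direct factor'' reduces to the single condition that $G_{\lambda}^{(p)}$ is not a direct factor of $G$, without any indecomposability hypothesis on $G$. Second, your observation about the coordinate swap on $G_{\lambda}^{(p)}\times G_{\lambda}^{(p)}$ is sharp and correct: the paper's statement, read literally, does need the proviso that $G_{\lambda}^{(p)}$ not be a direct factor of $G$, and the paper's proof tacitly assumes this when applying Curran. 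That is a genuine omission you have caught; but the right fix is to add the hypothesis, not to rebuild Curran's argument.
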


\begin{proof}
By Theorem \ref{jkTheo1}, the JK-group $G_{\lambda}^{(p)}$ has commutative endomorphism monoid and hence is directly indecomposable by point (3) of the above proposition. But Curran in \cite{Cur06a} proved (this is point (i) of Theorem 3.1 there) that if $H$ and $K$ are special finite aut-abelian $p$-groups which have no common direct factor, then $H\times K$ is aut-abelian as well; applying this with $H:=G_{\lambda}^{(p)}$ and $K:=G$, we are done.
\end{proof}

\begin{remmark}\label{falsifyRem}
As mentioned earlier, one can use the above proposition to falsify end-commutativity quickly in many known examples of finite aut-abelian $p$-groups:

(1) The first example of a finite non-abelian but aut-abelian group was given by Miller in \cite{Mil13a} from 1913. It is a $2$-group of order $64$. Later, in \cite{Str82a} from 1982, Struik generalized this example by giving, for each integer $n\geq 3$, a non-abelian but aut-abelian group $G_n$ of order $2^{n+3}$ such that Miller's example is $G_3$. As Struik observed, these groups are semidirect products, and hence they are not end-commutative by point (3) of the proposition. The same argument applies, of course, to the examples constructed in \cite{Cur87a}.

(2) In \cite{Mor94a} from 1994, Morigi constructed, for each prime $p$ and positive integer $n$, a finite $p$-group $G^{(p)}(n)$ of order $p^{n^2+3n+3}$ whose automorphism group is elementary abelian of order $p^{(n^2+n+1)(2n+2)}$. Since she also showed in that paper that for odd $p$, there is no aut-abelian group of order $p^6$ yet setting $n=1$ gives an aut-abelian group of order $p^7$, this settled the question for the minimal order of finite non-abelian aut-abelian $p$-groups for odd $p$. $G^{(p)}(n)$ is a finite $p$-group of class $2$ generated by elements $a_1,a_2,b_1,\ldots,b_{2n}$, and from the defining relations as well as the derived normal form theorem, it follows that both $a_1$ and $a_2$ are elements of order $p$ which are not in the center, whence by (1) of the proposition, $G^{(p)}(n)$ is not end-commutative. The group $G$ from \cite{JY12a}, which was the first example of a non-special finite aut-abelian $p$-group, also has an element of order $p$ not contained in the center (namely, for instance, the generator $x_4$).

(3) Later, in 2013, further examples of finite non-special aut-abelian $p$-groups were constructed in \cite{JRY13a}. For the group $G$ defined there, it is not difficult to see that $G/G'\cong\mathbb{Z}/p^{n-2}\mathbb{Z}\times(\mathbb{Z}/p^2\mathbb{Z})^3$ so that in particular $\mathrm{exp}(G/G')=p^{n-2}\geq p^2$ and there exists an automorphism mapping $x_4$ to, say, $x_4[x_1,x_2]$ so that the cyclic subgroup of order $p^2$ generated by $x_4$ is not characteristic. The group $G_1$ has an element of order $p$ not contained in the center, namely for instance $x_5$. Finally, as for the group $G_2$, note that the image of the generator $x_3$ under the canonical projection $G_2\rightarrow G_2/G_2'$ still has order $p^2$, so the exponent of the abelianization is $p^2$. But clearly, there exists an automorphism mapping $x_3$ to $x_3[x_1,x_3]$, which is not contained in the cyclic subgroup generated by $x_3$.
\end{remmark}

\section{The structure of $\mathrm{End}(G_{\lambda}^{(p)})$ in the exceptional cases}

For completeness, let us also study the endomorphism monoids in the remaining cases (the ones considered in Theorem \ref{jkTheo2}). We begin with the case $\lambda=(1,0)$ and $p$ an arbitrary prime; the case $\lambda=(1,1),p=2$ behaves differently. Throughout this whole section, the notion of a normalized endomorphism of $G_{\lambda}^{(p)}$ is understood as an endomorphism mapping each of the generators $a_1,a_2,b_1,b_2$ to an element in whose normal form the exponents of the commutators are all $0$; we denote this set of coset representatives of the center by $\mathcal{R}_{\lambda}^{(p)}$. The essential observation is the following:

\begin{theorem}\label{exceptionalTheo}
Let $p$ be any prime and let $a_1,a_2,b_1,b_2$ denote the generators of $G_{(1,0)}^{(p)}$ from its presentation. The following are equivalent for all functions $f:\{a_1,a_2,b_1,b_2\}\rightarrow\mathcal{R}_{(1,0)}^{(p)}$:

(1) $f$ extends to a (normalized) endomorphism of $G_{(1,0)}^{(p)}$.

(2) $f$ either maps each generator to itself or its image is contained in the cyclic subgroup $\langle a_1a_2^{-1}\rangle$.
\end{theorem}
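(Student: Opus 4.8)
The plan is to prove $(2)\Rightarrow(1)$ directly and to reduce $(1)\Rightarrow(2)$ to a finite problem of linear algebra over $\mathbb{F}_p$. For $(2)\Rightarrow(1)$: the assignment fixing each generator extends to the identity endomorphism; and if the image of $f$ lies in $\langle a_1a_2^{-1}\rangle$, then, since $\lambda=(1,0)$ gives $a_1^p=a_2^p$ and hence $(a_1a_2^{-1})^p=1$ by the power computation in the proof of Lemma~\ref{jkLem} (and directly for $p=2$), the subgroup $\langle a_1a_2^{-1}\rangle$ is abelian of exponent $p$; as every defining relator of $G_{(1,0)}^{(p)}$ is a word in $p$-th powers and commutators, $f$ sends each relator to $1$ and therefore extends. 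Since adding a homomorphism into the centre affects neither extendability nor normalizedness, one may move freely between $\langle a_1a_2^{-1}\rangle$ and a transversal of the centre.

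For $(1)\Rightarrow(2)$, write $G:=G_{(1,0)}^{(p)}$ and $V:=G/\zeta G\cong\mathbb{F}_p^4$. Since $\zeta G=G'$, every endomorphism $\varphi$ of $G$ maps $\zeta G=\varphi(G)'$ into $\zeta G$ and hence induces an $\mathbb{F}_p$-linear map $\Phi:=\bar\varphi$ on $V$; and because $\zeta G$ is elementary abelian, the $p$-th power $g^p$ and the commutator $[g,h]$ depend only on $\bar g,\bar h\in V$. Consequently, whether a prescribed $\Phi$ lifts to an endomorphism is a condition on $\Phi$ alone: the images in $\zeta G$ of the six defining relators must vanish. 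With $q\colon V\to\zeta G$ the $p$-th power map $\bar g\mapsto g^p$ (which is $\mathbb{F}_p$-linear for odd $p$, as $\binom{p}{2}\equiv 0\pmod p$; for $p=2$ it is quadratic with polarization the commutator form, and the argument below goes through with minor changes) and $B\colon V\times V\to\zeta G$ the alternating commutator form $(\bar g,\bar h)\mapsto[g,h]$, these six conditions are $B(\Phi\bar a_1,\Phi\bar a_2)=0$, $B(\Phi\bar b_1,\Phi\bar b_2)=0$, $q(\Phi\bar a_1)=q(\Phi\bar a_2)=B(\Phi\bar a_1,\Phi\bar b_1)$, $q(\Phi\bar b_1)=B(\Phi\bar a_2,\Phi\bar b_1)+B(\Phi\bar a_2,\Phi\bar b_2)$ and $q(\Phi\bar b_2)=B(\Phi\bar a_2,\Phi\bar b_2)$. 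Such ``admissible'' $\Phi$ are in bijection with the functions $f$ of the statement that extend to a normalized endomorphism (take $\mathcal{R}_{(1,0)}^{(p)}$-representatives of $\Phi\bar a_i,\Phi\bar b_j$), with $\Phi=\mathrm{id}$ corresponding to $f=\mathrm{id}$ and $\mathrm{im}\,\Phi\subseteq\langle\overline{a_1a_2^{-1}}\rangle$ to option~(2); so it suffices to show that the admissible $\Phi$ are exactly $\mathrm{id}$ and those with image in $\langle\overline{a_1a_2^{-1}}\rangle$.

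Two observations do most of the work. First, $q(\Phi\bar a_1)=q(\Phi\bar a_2)$ forces $\Phi(\overline{a_1a_2^{-1}})\in\ker q$, and here --- precisely because $\lambda_2=0$; for $\lambda_2\neq0$ the map $q$ is bijective, which is the non-exceptional situation --- one computes $\ker q=\langle\overline{a_1a_2^{-1}}\rangle$, a one-dimensional, $B$-isotropic line, necessarily $\Phi$-invariant. Second, if $\varphi(\zeta G)=1$, then the right-hand sides $B(\Phi\bar a_i,\Phi\bar y)=\varphi([a_i,y])$ of the last four conditions all vanish, so $q$ kills each of $\Phi\bar a_1,\Phi\bar a_2,\Phi\bar b_1,\Phi\bar b_2$, i.e.\ $\mathrm{im}\,\Phi\subseteq\ker q=\langle\overline{a_1a_2^{-1}}\rangle$; conversely, $\mathrm{im}\,\Phi\subseteq\langle\overline{a_1a_2^{-1}}\rangle$ makes every $B(\Phi\bar a_i,\Phi\bar b_j)$ vanish by isotropy, so $\varphi(\zeta G)=1$ in that case. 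It thus remains to show that an admissible $\Phi$ with $\varphi(\zeta G)\neq1$ satisfies $\Phi=\mathrm{id}$; equivalently, since $G$ is aut-abelian (so its automorphisms induce $\mathrm{id}$ on $V$) and finite, that such a $\varphi$ is an automorphism.

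As $G$ is a $p$-group, a non-injective $\varphi$ has $\ker\varphi\cap\zeta G\neq1$; so what must be shown is that an admissible $\Phi$ with $\ker\varphi\cap\zeta G\neq1$ has $\mathrm{im}\,\Phi\subseteq\langle\overline{a_1a_2^{-1}}\rangle$ (which then forces $\varphi(\zeta G)=1$, so this case is exactly option~(2), while a $\varphi$ that is injective is an automorphism and so gives $\Phi=\mathrm{id}$). I expect this to be the main obstacle: it is a bounded but genuinely case-heavy $\mathbb{F}_p$-computation with the six conditions, which I would organize by the positions of the vectors $\Phi\bar a_1,\Phi\bar a_2,\Phi\bar b_1,\Phi\bar b_2$ relative to the planes $\langle\bar a_1,\bar a_2\rangle$, $\langle\bar b_1,\bar b_2\rangle$ and the line $\langle\overline{a_1a_2^{-1}}\rangle$; in each branch the conditions of the shape ``$q$ of an image vector equals a $B$-value on image vectors'' annihilate enough coordinates of $\Phi$ to force either $\mathrm{im}\,\Phi\subseteq\langle\overline{a_1a_2^{-1}}\rangle$ or $\Phi$ invertible --- and in the latter case $\varphi$ is onto (since $\zeta G=\varphi(G)'$ and $\varphi(G)\zeta G=G$), hence an automorphism, contradicting non-injectivity. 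The case $p=2$, where $q$ is only quadratic, needs separate bookkeeping.
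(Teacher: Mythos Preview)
Your framework is sound and the packaging via the $p$-th power map $q$ and the commutator form $B$ is clean: the identification $\ker q=\langle\overline{a_1a_2^{-1}}\rangle$ is precisely why $\lambda=(1,0)$ is exceptional, and your equivalence ``$\varphi(\zeta G)=1\Leftrightarrow\mathrm{im}\,\Phi\subseteq\ker q$'' is correct and does dispose of that case. But as you yourself say, the proof is incomplete: the ``bounded but genuinely case-heavy $\mathbb{F}_p$-computation'' you defer is the entire content of the theorem, and the paper spends roughly two pages on it. Your proposed entry point---assume $\ker\varphi\cap\zeta G\neq 1$ and derive $\mathrm{im}\,\Phi\subseteq\ker q$---gives you only one linear relation among the four values $B(\Phi\bar a_i,\Phi\bar b_j)$, and it is not clear this is enough leverage to organize the cases; the paper in fact does not use non-injectivity as a hypothesis at all.

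The paper's organisation is worth knowing, since it is sharper than ``positions relative to the two planes''. It first proves a lemma classifying when two coset representatives commute: one of them is trivial, or both lie in the $a$-plane, or both in the $b$-plane, or one is a scalar multiple of the other. Since $\varphi(a_1),\varphi(a_2)$ must commute \emph{and} have equal $p$-th power (so differ by an element of $\ker q=\langle\overline{a_1a_2^{-1}}\rangle$), this immediately forces the primary dichotomy: either both $\varphi(a_i)$ lie in $A=\langle\zeta G,a_1,a_2\rangle$ (differing by a power of $a_1a_2^{-1}$), or $\varphi(a_1)=\varphi(a_2)\notin A$. Each branch is then finished by reading off the four $p$-th-power relators one at a time, invoking the commutation lemma again for $\varphi(b_1),\varphi(b_2)$ at the right moment; the first branch eventually pins $\varphi$ down to the identity, the second forces the image abelian. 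For $p=2$ the paper simply checks the $15^4$ candidate maps by computer; your quadratic-$q$ bookkeeping would work in principle but would be at least as laborious by hand.
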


Note that for each $p$, since the implication \enquote{(2)$\Rightarrow$(1)} in Theorem \ref{exceptionalTheo} is easy to check, the theorem holds true if and only if the number of normalized endomorphisms of $G_{(1,0)}^{(p)}$ equals $p^4+1$, which allowed us to check the case $p=2$ with GAP, so we may assume $p\geq 3$ from now on. In this case, using the observations on normal forms of powers from the proof of the lemma in Section 2, it is easy to see that elements in normal form have the same $p$-th power if and only if the sum of the exponents of $a_1$ and $a_2$ is the same modulo $p$ in both cases. In particular, the set of all elements of $G_{(1,0)}^{(p)}$ whose order divides $p$, which already coincides with $\Omega_1(G)$, is the subgroup generated by the center and the one element $a_1a_2^{-1}$, so an equivalent way of stating the (hard) implication \enquote{(1)$\Rightarrow$(2)} in Theorem \ref{exceptionalTheo} is that all endomorphisms of $G_{(1,0)}^{(p)}$ that are not automorphisms map into $\Omega_1(G)$ (which is an abelian subgroup). This is in principle just as for the other $G_{\lambda}^{(p)}$, but the set of potential images is a bit more complicated and also, the argumentation from the proof of Theorem \ref{jkTheo1} does not work here.

In order to prove Theorem \ref{exceptionalTheo}, we will use the following lemma (which is also part of Jonah and Konvisser's observations to show aut-abelianity):

\begin{lemmma}\label{commLem}
Let $p$ be an odd prime, $\lambda\in\{(1,0),(0,1),(1,1),\ldots,(p-1,1)\}$. The following are equivalent for all tuples $(k_1,k_2,l_1,l_2),(K_1,K_2,L_1,L_2)\in(\mathbb{Z}/p\mathbb{Z})^4$:

(1) The elements $a_1^{k_1}a_2^{k_2}b_1^{l_1}b_2^{l_2},a_1^{K_1}a_2^{K_2}b_1^{L_1}b_2^{L_2}\in G_{\lambda}^{(p)}$ commute.

(2) $(k_1,k_2,l_1,l_2)=0$ or $(K_1,K_2,L_1,L_2)=0$ or $l_1=l_2=L_1=L_2=0$ or $k_1=k_2=K_1=K_2=0$ or there exists $n\in\mathbb{Z}/p\mathbb{Z}$ such that $(K_1,K_2,L_1,L_2)=n\cdot(k_1,k_2,l_1,l_2)$.
\end{lemmma}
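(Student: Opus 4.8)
The plan is to compute, for arbitrary $p$ (we may assume $p$ odd by the remark after the statement), the commutator $[a_1^{k_1}a_2^{k_2}b_1^{l_1}b_2^{l_2},\,a_1^{K_1}a_2^{K_2}b_1^{L_1}b_2^{L_2}]$ explicitly in normal form and then read off exactly when it is trivial. Since $G_{\lambda}^{(p)}$ has class $2$, commutators are bilinear: the commutator of the two displayed elements equals $\prod_{i,j}[g_i,g_j]^{e_i f_j}$ where $(g_1,g_2,g_3,g_4)=(a_1,a_2,b_1,b_2)$, $(e_1,e_2,e_3,e_4)=(k_1,k_2,l_1,l_2)$ and $(f_1,f_2,f_3,f_4)=(K_1,K_2,L_1,L_2)$. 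Using the defining relations together with $[a_1,a_2]=[b_1,b_2]=1$ and antisymmetry, only the four \enquote{mixed} basis commutators $[a_i,b_j]$ survive, and the exponent of each is a fixed bilinear form in the two exponent vectors. Concretely one gets the exponent of $[a_1,b_1]$ as $k_1L_1-l_1K_1$ (plus any contribution coming from the fact that $a_1^p,a_2^p,b_1^p,b_2^p$ are themselves commutators — but since we are only asking whether the commutator is $1$, and the $p$-th powers are central, this does not affect triviality), and similarly for the other three. Writing $M$ for the $4\times 4$ matrix encoding which $[a_i,b_j]$ appears in $[a_s,b_t]$-terms is unnecessary; the cleaner statement is: the two elements commute if and only if the four quantities
\[
k_1L_1-l_1K_1,\quad k_1L_2-l_2K_1,\quad k_2L_1-l_1K_2,\quad k_2L_2-l_2K_2
\]
all vanish modulo $p$ (after accounting, via the JK relations, for the rewriting of the $p$-th powers, which only permutes/relabels these four forms by an invertible linear change and hence does not change the vanishing locus).

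The next step is the elementary linear-algebra fact: over the field $\mathbb{Z}/p\mathbb{Z}$, the four products $k_i L_j - l_j K_i$ (for $i\in\{1,2\}$, $j\in\{1,2\}$) all vanish if and only if the two vectors $(k_1,k_2)$ and $(K_1,K_2)$ are proportional \emph{and} the two vectors $(l_1,l_2)$ and $(L_1,L_2)$ are proportional, \emph{with compatible proportionality constants}. Unpacking this: set $u=(k_1,k_2)$, $u'=(K_1,K_2)$, $v=(l_1,l_2)$, $v'=(L_1,L_2)$. The vanishing of $k_iL_j-l_jK_i$ for all $i,j$ says precisely that the $2\times 2$ matrix $u^{\mathsf T}v' - v^{\mathsf T}u'$ (outer products) is zero, i.e. $u^{\mathsf T}v' = v^{\mathsf T}u'$. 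One then does the short case analysis: if $u=0$ and $v=0$ then the first element is central (case $(k_1,k_2,l_1,l_2)=0$); if $u'=0$ and $v'=0$, likewise; if $u=v=0$ but not both of $u',v'$ zero then we need $v^{\mathsf T}u'=0=u^{\mathsf T}v'$ which is automatic — wait, this forces nothing, so this subcase lands in \enquote{$(k_1,k_2,l_1,l_2)=0$} again. The genuinely interesting situation is when the first element is non-central: then at least one of $u,v$ is nonzero. If $v=0$ (so $u\ne 0$): $u^{\mathsf T}v'=0$ forces $v'=0$ (as $u\ne0$), giving $l_1=l_2=L_1=L_2=0$. Symmetrically $u=0$ gives $k_1=k_2=K_1=K_2=0$. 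If both $u\ne0$ and $v\ne0$: from $u^{\mathsf T}v'=v^{\mathsf T}u'$ being a rank-$\le 1$ symmetric constraint one deduces $u'=nu$ and $v'=nv$ for a common $n\in\mathbb{Z}/p\mathbb{Z}$; the common scalar is the main point, and it follows because $u^{\mathsf T}v'=v^{\mathsf T}u'$ together with $u,v$ nonzero forces the scalar relating $u'$ to the line through $u$ to equal the scalar relating $v'$ to the line through $v$ (if, say, $u$ and $v$ are linearly independent this is immediate; if $u,v$ are themselves proportional one argues directly). This is exactly alternative (2) of the lemma.

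So my key steps, in order, are: (i) use class-$2$ bilinearity plus the JK relations and $[a_1,a_2]=[b_1,b_2]=1$ to reduce the commutator to a product of the four central generators $[a_i,b_j]$ with exponents the bilinear forms above; (ii) observe that triviality of the commutator is equivalent to the simultaneous vanishing of these four forms mod $p$, noting that the $p$-th-power relations only re-express these forms invertibly and do not alter the vanishing set; (iii) carry out the outer-product / proportionality case analysis over $\mathbb{F}_p$ to match the five alternatives in (2); (iv) conversely, note each of the five alternatives in (2) trivially makes all four forms vanish, so the equivalence is complete. I expect step (i) to be the one requiring the most care — specifically, being honest about what happens when exponents exceed $p-1$ or when powers like $a_1^{k_1}$ interact, since $a_1^p=[a_1,b_1]$ etc., so one should either first reduce all exponents to $\{0,\dots,p-1\}$ (harmless, as changing an exponent by $p$ multiplies the element by a central element and does not affect whether two elements commute) or carry the extra central factors along and note they drop out. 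Once that bookkeeping is pinned down, steps (ii)–(iv) are routine. Note also that the hypothesis $p$ odd is used only to guarantee, via the power formula from the proof of Lemma \ref{jkLem} (the triangle-number term $\Delta_{n-1}$), that the reduction to the listed bilinear forms is clean; for $p=2$ the statement is checked separately as already indicated.
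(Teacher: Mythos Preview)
Your approach is essentially the same as the paper's: compute the commutator using class-$2$ bilinearity and the relations $[a_1,a_2]=[b_1,b_2]=1$ to get the four conditions $k_iL_j=l_jK_i$, then do a short case analysis over $\mathbb{F}_p$ to extract the five alternatives. The paper does exactly this, though it organizes the case analysis slightly differently (assume the first four alternatives fail, pick $k_1\neq 0$, set $n=K_1/k_1$, and read off the remaining equalities). Your outer-product phrasing $u\,v'^{\mathsf T}=u'\,v^{\mathsf T}$ is a pleasant way to package the same computation.

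Two places where your write-up is confused, though neither breaks the argument. First, the $p$-th-power relations $a_i^p=[\cdot,\cdot]$, $b_j^p=[\cdot,\cdot]$ play \emph{no role whatsoever} in computing $[a_1^{k_1}a_2^{k_2}b_1^{l_1}b_2^{l_2},\,a_1^{K_1}a_2^{K_2}b_1^{L_1}b_2^{L_2}]$: in a class-$2$ group the commutator map is bilinear, so for exponents in $\{0,\dots,p-1\}$ you get directly $\prod_{i,j}[a_i,b_j]^{k_iL_j-l_jK_i}$, already in normal form. There is no ``invertible linear change'' coming from the JK relations to account for; that sentence should simply be deleted. Second, and relatedly, your closing remark that ``$p$ odd is used only \dots\ via the power formula from the proof of Lemma~\ref{jkLem}'' is off: the triangle-number formula concerns $p$-th powers, not commutators, and does not enter here at all. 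The hypothesis $p$ odd in the lemma is inherited from the context in which the lemma is applied, not from anything in its proof.
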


\begin{proof}
Considering the commutator of the two elements yields the following: The elements in question commute if and only if the following four relations hold true: $$k_1L_1=l_1K_1,k_1L_2=l_2K_1,k_2L_1=l_1K_2,k_2L_2=l_2K_2.$$ From this, verifying \enquote{(2)$\Rightarrow$(1)} consists only in some routine checks, and for \enquote{(1)$\Rightarrow$(2)}, assume that of the five cases mentioned in (2), the first four are violated; assume w.l.o.g.~that $k_1\not=0$. Set $n:=\frac{K_1}{k_1}$ in $\mathbb{F}_p$. Then $K_1=n\cdot k_1$ is clear by definition, and it follows directly from the above relations that $L_1=n\cdot l_1$ and $L_2=n\cdot l_2$. Since at least one of $l_1,l_2,L_1,L_2$ is not zero, we therefore cannot have $l_1=l_2=0$, so assume w.l.o.g.~that $l_1\not=0$. Then the third relation gives $K_2=\frac{L_1}{l_1}k_2=n\cdot k_2$.
\end{proof}

\begin{proof}[Proof of Theorem \ref{exceptionalTheo}.]
As mentioned before, the check of \enquote{(2)$\Rightarrow$(1)} is easy, so in this proof, we concentrate on the implication \enquote{(1)$\Rightarrow$(2)} for odd $p$. Note that it is not difficult to see that for an endomorphism $\varphi$ of $G_{(1,0)}^{(p)}$, the following are equivalent:

(1) $\varphi$ maps into $\Omega_1(G_{(1,0)}^{(p)})$.

(2) $(G_{(1,0)}^{(p)})'=\zeta G_{(1,0)}^{(p)}\subseteq\mathrm{ker}(\varphi)$.

(3) $\mathrm{im}(\varphi)$ is abelian.

Also, for normalized $\varphi$, (1) can be restated as

(1') $\varphi$ maps the generators $a_1,a_2,b_1,b_2$ into $\langle a_1a_2^{-1}\rangle$.

In what follows, just as Jonah and Konvisser, we denote by $A$ the subgroup of $G_{(1,0)}^{(p)}$ generated by the center and the two generators $a_1,a_2$ and by $B$ the subgroup generated by the center and $b_1,b_2$; observe that $A$ and $B$ are abelian subgroups.

Now fix a normalized endomorphism $\varphi$ of $G_{(1,0)}^{(p)}$. We make use of the fact that homomorphisms preserve relations; more specifically, since the relations $[a_1,a_2]=1$ and $a_1^p=a_2^p$ hold true in $G_{(1,0)}^{(p)}$, the images $\varphi(a_1),\varphi(a_2)$ must also commute and have the same $p$-th powers. The latter consequence implies that the two images must arise from each other by left multiplication with an element of the cyclic subgroup $\langle a_1a_2^{-1}\rangle$, which in view of the first consequence and the lemma proved before shows that precisely one of the following two cases occurs:

(1.) $\varphi(a_1),\varphi(a_2)\in A$ and are not necessarily equal, but can only differ by (left) multiplication with a power of $a_1a_2^{-1}$.

(2.) $\varphi(a_1)=\varphi(a_2)\notin A$.

We now consider these cases one after the other, starting with (1.). Assume that $\varphi$ does not map into $\langle a_1a_2^{-1}\rangle$. We need to show that $\varphi$ fixes each generator. Say $$a_1\mapsto a_1^{k_1}a_2^{k_2},a_2\mapsto a_1^ka_2^{k_1+k_2-k},b_1\mapsto a_1^{K_1}a_2^{K_2}b_1^{L_1}b_2^{L_2},b_2\mapsto a_1^{\kappa_1}a_2^{\kappa_2}b_1^{\lambda_1}b_2^{\lambda_2}$$ for appropriate $k_1,k_2,k,K_1,K_2,L_1,L_2,\kappa_1,\kappa_2,\lambda_1,\lambda_2\in\mathbb{Z}/p\mathbb{Z}$. Note that $\varphi(a_2)\not=1$, since otherwise by the defining relations, $b_1,b_2$ map to elements of order a divisor of $p$, which are all in the centralizer of $A$ so that the image of $\varphi$ then is abelian, a contradiction.

Let us now show that at least one of $k_1,k_2$ is not equal to $0$. Otherwise, $a_1\mapsto 1$ and by the observation from the end of the last paragraph, $k\not=0$. Note that by the fourth of the defining relations, we have $$[a_1,b_1]^{\kappa_1+\kappa_2}[a_2,b_1]^{\lambda_1}[a_2,b_2]^{\lambda_1+\lambda_2}=[a_1,b_1]^{\lambda_1\cdot k}[a_1,b_2]^{\lambda_2\cdot k}[a_2,b_1]^{\lambda_1\cdot k}[a_2,b_2]^{\lambda_2\cdot k}.$$ Comparing the exponents, we obtain that $\lambda_2=0$ and $\lambda_1=0$ so that $b_2$ maps into $A$. Since $\varphi(b_1)$ and $\varphi(b_2)$ must commute, if $\varphi(b_2)\not=1$, $\varphi(b_1)\in A$ as well so that the image is abelian. But if $\varphi(b_2)=1$, the third defining relation yields the same as the fourth with $\kappa_i$ replaced by $K_i$ and $\lambda_j$ by $L_j$, so that we then get $L_1=L_2=0$ and the image is abelian in this case as well. Hence we indeed get that $(k_1,k_2)\not=(0,0)$.

The first defining relation yields $$[a_1,b_1]^{k_1+k_2}=[a_1,b_1]^{k_1L_1}[a_1,b_2]^{k_1L_2}[a_2,b_1]^{k_2L_1}[a_2,b_2]^{k_2L_2},$$ hence by comparison of exponents $$k_1L_1=k_1+k_2,k_1L_2=k_2L_1=k_2L_2=0.$$ If $k_2\not=0$, it follows from this that $L_1=L_2=0$ so that $\varphi(b_1)\in A$ and the image certainly is abelian except possibly for the case $\varphi(b_1)=1$. However, in this case, by the second defining relation, $a_2$ must map to some element of order $p$, that is, $\varphi(a_2)=a_1^na_2^{-n}$ for some $n\not=0$, and then, just as before, by considering the fourth defining relation, we see that actually, $\varphi(b_2)\in A$ as well.

Hence $k_2=0$ so that $k_1\not=0$, which by the first defining relation implies $L_1=1$ and $L_2=0$. Summing up, so far we know that $$a_1\mapsto a_1^{k_1},a_2\mapsto a_1^ka_2^{k_1-k},b_1\mapsto a_1^{K_1}a_2^{K_2}b_1,b_2\mapsto a_1^{\kappa_1}a_2^{\kappa_2}b_1^{\lambda_1}b_2^{\lambda_2}$$ for appropriate $k_1,k,K_1,K_2,\kappa_1,\kappa_2,\lambda_1,\lambda_2\in\mathbb{Z}/p\mathbb{Z}$ with $k_1\not=0$.

Now the fourth defining relation yields $$[a_1,b_1]^{\kappa_1+\kappa_2}[a_2,b_1]^{\lambda_1}[a_2,b_2]^{\lambda_1+\lambda_2}=[a_1,b_1]^{k\cdot\lambda_1}[a_1,b_2]^{k\cdot\lambda_2}[a_2,b_1]^{(k_1-k)\cdot\lambda_1}[a_2,b_2]^{(k_1-k)\cdot\lambda_2},$$ hence $$\kappa_1+\kappa_2=k\cdot\lambda_1,k\cdot\lambda_2=0,(k_1-k)\cdot\lambda_1=\lambda_1,(k_1-k)\cdot\lambda_2=\lambda_1+\lambda_2.$$ If $k\not=0$, it follows from this that $\lambda_2=0$ and $\lambda_1=0$ so that then $\varphi(b_2)\in A$ and we are done except possibly for the case $\varphi(b_2)=1$. However, in this case, a look at the third defining relation yields $$[a_1,b_1]^{K_1+K_2}[a_2,b_1][a_2,b_2]=[a_1,b_1]^k[a_2,b_1]^{k_1-k},$$ a contradiction.

Hence $k=0$, which lets us simplify the above identities to $$\kappa_2=-\kappa_1,(k_1-1)\cdot\lambda_1=0,(k_1-1)\cdot\lambda_2=\lambda_1.$$ From this, it follows that if $k_1\not=1$, then $\lambda_1=\lambda_2=0$ so that $\varphi(b_2)\in A$, and $\varphi(b_1)\in A$ follows as in the last paragraph. Therefore $k_1=1$, from which it also follows that $\lambda_1=0$.

By now, we know that $$a_1\mapsto a_1,a_2\mapsto a_2,b_1\mapsto a_1^{K_1}a_2^{K_2}b_1,b_2\mapsto a_1^{\kappa_1}a_2^{-\kappa_1}b_2^{\lambda_2}$$ for appropriate $K_1,K_2,\kappa_1,\lambda_2\in\mathbb{Z}/p\mathbb{Z}$. The third defining relation yields $$[a_1,b_1]^{K_1+K_2}[a_2,b_1][a_2,b_2]=[a_2,b_1][a_2,b_2]^{\lambda_2},$$ that is, $K_2=-K_1$ and $\lambda_2=1$. Now by the form of $\varphi(b_1)=a_1^{K_1}a_2^{-K_1}b_1$ and $\varphi(b_2)=a_1^{\kappa_1}a_2^{-\kappa_1}b_2$ and the fact that they must commute, we can conclude by the lemma before that $K_1=\kappa_1=0$ so that $\varphi$ indeed fixes all of the generators. This concludes the proof of \enquote{(1)$\Rightarrow$(2)} in case (1.).

It remains to treat the case (2.), where neither of the images $\varphi(a_1),\varphi(a_2)$ is contained in $A$, but we know that they are equal. In this case, we want to show that $\varphi$ maps the generators into $\langle a_1a_2^{-1}\rangle$, or equivalently, that the center is contained in the kernel of $\varphi$. Say $$a_1,a_2\mapsto a_1^{k_1}a_2^{k_2}b_1^{l_1}b_2^{l_2},b_1\mapsto a_1^{K_1}a_2^{K_2}b_1^{L_1}b_2^{L_2},b_2\mapsto a_1^{\kappa_1}a_2^{\kappa_2}b_1^{\lambda_1}b_2^{\lambda_2}$$ for appropriate $k_1,k_2,l_1,l_2,K_1,K_2,L_1,L_2,\kappa_1,\kappa_2,\lambda_1,\lambda_2\in\mathbb{Z}/p\mathbb{Z}$ such that $(l_1,l_2)\not=(0,0)$. Note that also, $\varphi(b_1)\not=1$, since otherwise, by the defining relations, $\varphi(a_1),\varphi(a_2)$ are forced to have order $p$, from which it follows that they are contained in $A$ (since $\varphi$ is normalized), a contradiction.

Now we know that $\varphi(b_1)$ and $\varphi(b_2)$ must commute. Note that since $\varphi(a_1)=\varphi(a_2)$, the defining relations give $\varphi(b_1)^p=\varphi(a_1)^p\varphi(b_2)^p$, that is, $$[a_1,b_1]^{K_1+K_2}[a_2,b_1]^{L_1}[a_2,b_2]^{L_1+L_2}=[a_1,b_1]^{k_1+k_2+\kappa_1+\kappa_2}[a_2,b_1]^{l_1+\lambda_1}[a_2,b_2]^{l_1+l_2+\lambda_1+\lambda_2},$$ that is, $$K_1+K_2=k_1+k_2+\kappa_1+\kappa_2,L_1=l_1+\lambda_1,L_2=l_2+\lambda_2.$$ Hence if $\varphi(b_1)\in A$, we get a contradiction, since by the lemma, we necessarily have $\varphi(b_2)\in A$ as well, which is impossible since at least one $l_i\not=0$. Therefore, we need to deal with the following two subcases:

(a.) $\varphi(b_1)\notin A\cup B$, and the tuple of exponents of $\varphi(b_2)$ is a multiple of the one of $\varphi(b_1)$.

(b.) $\varphi(b_1),\varphi(b_2)\in B$.

Let us start with (a.). Then we know that $\mathrm{ker}(\varphi)$ contains the elements $a_1a_2^{-1}$ and $b_1^kb_2^{-1}c$ for some $c\in\zeta G_{(1,0)}^{(p)}$ and $k\in\mathbb{Z}/p\mathbb{Z}$. Taking brackets with generators and raising the second element to the $p$-th power shows that $\mathrm{ker}(\varphi)$ contains the following elements from the center: $$[a_1,b_1][a_2,b_1]^{-1},[a_1,b_2][a_2,b_2]^{-1},[a_1,b_1]^k[a_1,b_2]^{-1},[a_2,b_1]^k[a_2,b_2]^{k-1}.$$ Since we want to show that $\mathrm{ker}(\varphi)$ contains the entire center, we want to know whether these four elements generate the center. Now computing the determinant of the associated $(4\times 4)$-matrix over $\mathbb{F}_p$ by any of the standard methods yields: $$\mathrm{det}\begin{pmatrix}1 & 0 & k & 0 \\ 0 & 1 & -1 & 0 \\ -1 & 0 & 0 & k \\ 0 & -1 & 0 & k-1\end{pmatrix}=k^2,$$ so except for the case $k=0$, we are done.

Now $k=0$ means that $\varphi(b_2)=1$. In this case, $\varphi(b_1)^p=\varphi(a_1)^p$, from which it follows that $K_2=k_1+k_2-K_1,L_1=l_1,L_2=l_2$. Then the first (or second) defining relation yields $$k_1+k_2=(k_1-K_1)l_1,0=(k_1-K_1)l_2,l_1=(k_2-K_2)l_1,l_1+l_2=(k_2-K_2)l_2.$$ In particular, we have $l_2=0$ or $K_1=k_1$. However, $l_2=0$ implies by the last of the identities that $l_1=0$, a contradiction. Hence we can conclude $K_1=k_1$ so that $a_1,a_2,b_1$ all map to the same value and the image of $\varphi$ is abelian, as we wanted to show. This concludes the proof of the implication in subcase (a.).

Subcase (b.) means that $K_1=K_2=\kappa_1=\kappa_2=0$. Then the first of the identities which we derived at the beginning of the argumentation for case (2.) (before the distinction of the two subcases) yields $k_2=-k_1$, and then the first defining relation gives $$0=k_1L_1,0=k_1L_2,l_1=-k_1L_1,l_1+l_2=-k_1L_2.$$ Now if $k_1\not=0$, then the first two identities give $L_1=L_2=0$, which, plugged into the last two identities, yields $l_1=l_2=0$, a contradiction. However, $k_1=0$ also yields $l_1=l_2=0$, so the subcase (b.) is impossible. This concludes the proof of Theorem \ref{exceptionalTheo}.
\end{proof}

Now that we know all endomorphisms of $G_{(1,0)}^{(p)}$, we can also exhibit the structure of the monoid which they form. Consider the following binary operation $\odot$ on the set $(\mathbb{Z}/p\mathbb{Z})^4$: $$(x_1,x_2,x_3,x_4)\odot(y_1,y_2,y_3,y_4):=((x_1-x_2)y_1,(x_1-x_2)y_2,(x_1-x_2)y_3,(x_1-x_2)y_4).$$ It is not difficult to see (and will also follow from the proof of the next corollary) that $(\mathbb{Z}/p\mathbb{Z})^4$ together with this operation is a semigroup, which we will denote henceforth by $S_p$. Note that $S_p$ has no identity (more specifically, there exists no right-neutral element for elements distinct from $(0,0,0,0)$ whose first two entries are equal), and recall that for any semigroup $S$, $S^1$ denotes the monoid obtained from $S$ by adjoining an identity if $S$ itself does not have one (and otherwise $S^1:=S$).

Let $\phi$ denote the left action of the monoid $S_p^1$ on the additive monoid $\mathrm{Mat}_{4,4}(\mathbb{F}_p)$ where all non-identity elements act as the zero endomorphism, and let $\psi$ denote the right action of $S_p^1$ on $\mathrm{Mat}_{4,4}(\mathbb{F}_p)$ defined via $$(a_{i,j})_{1\leq i,j\leq 4}\cdot(x_1,x_2,x_3,x_4)^t:=(x_j(a_{i,1}-a_{i,2}))_{1\leq i,j\leq 4}.$$ Observe that every endomorphism $\varphi$ of $G_{(1,0)}^{(p)}$ has a unique representation as a sum of a normalized endomorphism and an endomorphism mapping into the center; we denote the first by $\mathrm{norm}(\varphi)$ and the latter by $\mathrm{cent}(\varphi)$. To each normalized endomorphism $\tilde{\varphi}$, which by Theorem \ref{exceptionalTheo} is either the identity or maps each generator into $\langle a_1a_2^{-1}\rangle$, we associate an element $\mathrm{vec}(\tilde{\varphi})$ from $S_p^1$, namely the identity if $\tilde{\varphi}=\mathrm{id}$ and otherwise the vector consisting of the exponents occurring when writing the values of $a_1,a_2,b_1,b_2$ under $\varphi$ in that order as powers of $a_1a_2^{-1}$. Also, to each endomorphism $f$ whose image is contained in the center, we associate a $(4\times 4)$-matrix over $\mathbb{F}_p$, denoted $\mathrm{mat}(f)$, whose $j$-th column is the coordinate vector of the image of the $j$-th generator (with the generators again in the \enquote{usual} order $a_1,a_2,b_1,b_2$) in terms of the basis $[a_1,b_1],[a_1,b_2],[a_2,b_1],[a_2,b_2]$. Then we have the following:

\begin{corollary}\label{exceptionalCor}
The map $\alpha:\mathrm{End}(G_{(1,0)}^{(p)})\rightarrow \mathrm{Mat}_{4,4}(\mathbb{F}_p)\tensor[_\phi]{\bowtie}{_\psi}S_p^1$ sending $\varphi\mapsto (\mathrm{mat}(\mathrm{cent}(\varphi)),$\linebreak[4]$\mathrm{vec}(\mathrm{norm}(\varphi))),$ is an isomorphism of monoids.
\end{corollary}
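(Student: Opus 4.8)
The plan is to show that $\alpha$ is a bijective homomorphism of monoids; no single step is deep, so the work is organizational. First one records that the codomain makes sense. Writing $\odot$ also for the operation on $S_p$, associativity holds since both $(u\odot v)\odot w$ and $u\odot(v\odot w)$ have $j$-th entry $(u_1-u_2)(v_1-v_2)w_j$; the left action $\phi$ of $S_p^1$ is a monoid homomorphism because the adjoined identity acts as $\mathrm{id}_{\mathrm{Mat}_{4,4}(\mathbb{F}_p)}$, every element of $S_p$ acts as $0$, and a product of two elements of $S_p$ is again in $S_p$; the right action $\psi$ satisfies $M\cdot 1=M$ and $(M\cdot u)\cdot v=M\cdot(u\odot v)$, both having $(i,j)$-entry $(u_1-u_2)v_j(a_{i,1}-a_{i,2})$ when $M=(a_{i,j})$, so $\psi$ is a monoid homomorphism into $\mathrm{End}(\mathrm{Mat}_{4,4}(\mathbb{F}_p))^{\ast}$; and the two actions commute because, for $u\in S_p^1$ and a matrix $M$, both $(u\cdot M)\cdot v$ and $u\cdot(M\cdot v)$ equal $M\cdot v$ if $u=1$ and $0$ otherwise. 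Hence $\mathrm{Mat}_{4,4}(\mathbb{F}_p)\tensor[_\phi]{\bowtie}{_\psi}S_p^1$ is defined, with identity $(0,1)$; all these verifications are elementary.

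That $\alpha$ is a bijection follows at once from Theorem~\ref{exceptionalTheo} and the generalities recalled before the statement: each endomorphism is uniquely $\mathrm{norm}(\varphi)+\mathrm{cent}(\varphi)$; the endomorphisms into the centre are in bijection with $\mathrm{Mat}_{4,4}(\mathbb{F}_p)$ under $\mathrm{mat}$, since in any JK-group every function $\{a_1,a_2,b_1,b_2\}\to\zeta G_{(1,0)}^{(p)}$ extends uniquely to such an endomorphism; and, by Theorem~\ref{exceptionalTheo}, the normalized endomorphisms are exactly $\mathrm{id}$ together with the $p^{4}$ endomorphisms carrying each generator into the order-$p$ cyclic group $\langle a_1a_2^{-1}\rangle$ (any function $\{a_1,a_2,b_1,b_2\}\to\langle a_1a_2^{-1}\rangle$ extends to an endomorphism and is recorded faithfully by its exponent vector), so $\mathrm{vec}$ identifies them with $S_p^1$. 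Since $\mathrm{norm}(\mathrm{id})=\mathrm{id}$ and $\mathrm{cent}(\mathrm{id})$ is trivial, $\alpha(\mathrm{id})=(0,1)$.

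The heart of the proof is the identity $\alpha(\varphi\circ\chi)=\alpha(\varphi)\odot\alpha(\chi)$ for endomorphisms $\varphi,\chi$. Write $\varphi=\tilde{\varphi}+f$, $\chi=\tilde{\chi}+g$ with $\tilde{\varphi}=\mathrm{norm}(\varphi)$, $\tilde{\chi}=\mathrm{norm}(\chi)$, $f=\mathrm{cent}(\varphi)$, $g=\mathrm{cent}(\chi)$, and set $u=\mathrm{vec}(\tilde{\varphi})$, $v=\mathrm{vec}(\tilde{\chi})$, $M=\mathrm{mat}(f)$, $N=\mathrm{mat}(g)$. As $g(x)\in\zeta G_{(1,0)}^{(p)}=(G_{(1,0)}^{(p)})'$ for all $x$ and $f$ kills $(G_{(1,0)}^{(p)})'$ (being a homomorphism into an abelian group), for each generator $x$
$$(\varphi\circ\chi)(x)=\varphi(\tilde{\chi}(x))\,\varphi(g(x))=(\tilde{\varphi}\circ\tilde{\chi})(x)\cdot f(\tilde{\chi}(x))\cdot\tilde{\varphi}(g(x)),$$
the last two factors being central. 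Here $\tilde{\varphi}\circ\tilde{\chi}$ is again normalized — it is $\mathrm{id}$ precisely when $\tilde{\varphi}=\tilde{\chi}=\mathrm{id}$, and otherwise has image inside $\langle a_1a_2^{-1}\rangle$ — and $h\colon x\mapsto f(\tilde{\chi}(x))\,\tilde{\varphi}(g(x))$ takes central values and extends to a homomorphism, either by a short commutator computation or because $\varphi\circ\chi$ and $\tilde{\varphi}\circ\tilde{\chi}$ induce the same endomorphism of $G_{(1,0)}^{(p)}/(G_{(1,0)}^{(p)})'$, so the pointwise quotient of these two endomorphisms takes central values and is therefore a homomorphism. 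By uniqueness of the normalized decomposition, $\mathrm{norm}(\varphi\circ\chi)=\tilde{\varphi}\circ\tilde{\chi}$ and $\mathrm{cent}(\varphi\circ\chi)=h$.

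It remains to push these two facts through $\mathrm{vec}$ and $\mathrm{mat}$. For the $S_p^1$-coordinate: if $\tilde{\varphi}=\mathrm{id}$ or $\tilde{\chi}=\mathrm{id}$, then $\tilde{\varphi}\circ\tilde{\chi}$ equals $\tilde{\chi}$ or $\tilde{\varphi}$, so $\mathrm{vec}(\tilde{\varphi}\circ\tilde{\chi})$ equals $v=1\odot v$ or $u=u\odot 1$, i.e.\ $u\odot v$; otherwise $(\tilde{\varphi}\circ\tilde{\chi})(x_j)=\tilde{\varphi}((a_1a_2^{-1})^{v_j})=(a_1a_2^{-1})^{(u_1-u_2)v_j}$, so $\mathrm{vec}(\tilde{\varphi}\circ\tilde{\chi})=u\odot v$ here too. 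For the matrix coordinate, since the centre is elementary abelian, $\mathrm{mat}(h)=\mathrm{mat}(x\mapsto f(\tilde{\chi}(x)))+\mathrm{mat}(x\mapsto\tilde{\varphi}(g(x)))$. The second summand is $N$ if $\tilde{\varphi}=\mathrm{id}$ (i.e.\ $u=1$) and $0$ otherwise (since then $\tilde{\varphi}$ kills $(G_{(1,0)}^{(p)})'\ni g(x)$), hence equals $u\cdot N$ under $\phi$. The first summand is $M$ if $\tilde{\chi}=\mathrm{id}$, while for $\tilde{\chi}\neq\mathrm{id}$ one has $f(\tilde{\chi}(x_j))=(f(a_1)f(a_2)^{-1})^{v_j}$, whose coordinate vector is $v_j$ times the difference of the first two columns of $M$, so the first summand equals $M\cdot v$ under $\psi$. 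Therefore $\mathrm{mat}(h)=M\cdot v+u\cdot N$, and $\alpha(\varphi\circ\chi)=(M\cdot v+u\cdot N,\,u\odot v)=\alpha(\varphi)\odot\alpha(\chi)$; with $\alpha(\mathrm{id})=(0,1)$ and the bijectivity above, $\alpha$ is an isomorphism of monoids. The main obstacle is the previous paragraph: isolating the central correction $h$ inside $\varphi\circ\chi$ and recognizing that its matrix splits into a \enquote{right} part $M\cdot v$, arising from $\mathrm{cent}(\varphi)$ twisted by $\mathrm{norm}(\chi)$, and a \enquote{left} part $u\cdot N$, arising from $\mathrm{cent}(\chi)$ twisted by $\mathrm{norm}(\varphi)$ — this is exactly the two-sidedness in the statement. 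Everything else, granting Theorem~\ref{exceptionalTheo}, is routine.
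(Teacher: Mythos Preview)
Your proof is correct and follows essentially the same route as the paper's: decompose each endomorphism as $\mathrm{norm}+\mathrm{cent}$, expand $\varphi\circ\chi$ accordingly, and identify the resulting summands with the two-sided semidirect product ingredients $M\cdot v$, $u\cdot N$, and $u\odot v$. You are simply more explicit than the paper in verifying that the codomain is well defined and in justifying the bijection via Theorem~\ref{exceptionalTheo}, but the substance is the same.
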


\begin{proof}
The map clearly is a bijection sending the identity of its source to the identity of the target, so it suffices to show compatibility with the binary operations. Now for any endomorphisms $\varphi_1,\varphi_2$ of $G_{(1,0)}^{(p)}$, we have $$\alpha(\varphi_1\circ\varphi_2)=\alpha((\mathrm{norm}(\varphi_1)+\mathrm{cent}(\varphi_1))\circ(\mathrm{norm}(\varphi_2)+\mathrm{cent}(\varphi_2)))=$$$$\alpha(\mathrm{norm}(\varphi_1)\circ\mathrm{norm}(\varphi_2)+\mathrm{cent}(\varphi_1)\circ\mathrm{norm}(\varphi_2)+\mathrm{norm}(\varphi_1)\circ\mathrm{cent}(\varphi_2))=$$$$(\mathrm{mat}(\mathrm{cent}(\varphi_1)\circ\mathrm{norm}(\varphi_2))+\mathrm{mat}(\mathrm{norm}(\varphi_1)\circ\mathrm{cent}(\varphi_2)),\mathrm{vec}(\mathrm{norm}(\varphi_1)\circ\mathrm{norm}(\varphi_2));$$

for the last equality, note that since the center is equal to the commutator subgroup, it is fully-invariant, so that $\mathrm{norm}(\varphi_1)\circ\mathrm{cent}(\varphi_2)$ is an endomorphism mapping into the center. Also, an endomorphism is normalized if and only if it maps all generators into $\langle a_1a_2^{-1}\rangle$, a property clearly preserved under composition.

Now let $\mathrm{vec}(\mathrm{norm}(\varphi_k))=(x_{k,1},x_{k,2},x_{k,3},x_{k,4})^t$ and $\mathrm{mat}(\mathrm{cent}(\varphi_k))=(a_{i,j}^{(k)})_{1\leq i,j\leq 4}$ for $k=1,2$. Then $\mathrm{norm}(\varphi_1)\circ\mathrm{norm}(\varphi_2)$ sends the $j$-th generator to $$\mathrm{norm}(\varphi_1)((a_1a_2^{-1})^{x_{2,j}})=((a_1a_2^{-1})^{x_{1,1}}(a_1a_2^{-1})^{-x_{1,2}})^{x_{2,j}}=(a_1a_2^{-1})^{(x_{1,1}-x_{1,2})x_{2,j}}.$$ Also, since endomorphisms which are not automorphisms have the entire center contained in their kernel, $\mathrm{norm}(\varphi_1)\circ\mathrm{cent}(\varphi_2)$ is the trivial endomorphism if $\mathrm{norm}(\varphi_1)\not=\mathrm{id}$, and $\mathrm{cent}(\varphi_2)$ otherwise. Finally, $\mathrm{cent}(\varphi_1)\circ\mathrm{norm}(\varphi_2)$ maps the $j$-th generator to $\mathrm{cent}(\varphi_1)((a_1a_2^{-1})^{x_{2,j}})$, which is $$[a_1,b_1]^{x_{2,j}(a_{1,1}^{(1)}-a_{1,2}^{(1)})}[a_1,b_2]^{x_{2,j}(a_{2,1}^{(1)}-a_{2,2}^{(1)})}[a_2,b_1]^{x_{2,j}(a_{3,1}^{(1)}-a_{3,2}^{(1)})}[a_2,b_2]^{x_{2,j}(a_{4,1}^{(1)}-a_{4,2}^{(1)})}.$$ From all this, the assertion now follows from the definitions of \enquote{two-sided semidirect product} and of the structures involved.
\end{proof}

\begin{corollary}\label{exceptionalCor2}
Let $p$ be any prime. The action of $\mathrm{Aut}(G_{(1,0)}^{(p)})=\mathrm{End}(G_{(1,0)}^{(p)})^{\ast}$ on $\mathrm{End}(G_{(1,0)}^{(p)})$ via conjugation has exactly $3p^{16}-p^{12}$ orbits, of which $2p^{16}$ have length $1$ and $p^{12}(p^4-1)$ have length $p^4$.
\end{corollary}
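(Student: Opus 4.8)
The plan is to carry out the entire count inside the concrete model $\mathrm{Mat}_{4,4}(\mathbb{F}_p)\tensor[_\phi]{\bowtie}{_\psi}S_p^1$ provided by Corollary \ref{exceptionalCor}. First I would pin down how $\mathrm{Aut}(G_{(1,0)}^{(p)})$ sits inside this monoid. By the preliminaries of Section 2, the automorphisms of $G_{(1,0)}^{(p)}$ are precisely the central automorphisms $f^{\ast}$ with $f$ a homomorphism $G_{(1,0)}^{(p)}\rightarrow\zeta G_{(1,0)}^{(p)}$; and since by Theorem \ref{exceptionalTheo} a non-identity normalized endomorphism maps the generators into $\langle a_1a_2^{-1}\rangle$ and hence has image inside the proper subgroup $\Omega_1(G_{(1,0)}^{(p)})$, an endomorphism $\varphi$ is an automorphism if and only if $\mathrm{norm}(\varphi)=\mathrm{id}$. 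Consequently, under the isomorphism of Corollary \ref{exceptionalCor}, the group $\mathrm{Aut}(G_{(1,0)}^{(p)})$ corresponds exactly to $U:=\{(A,1)\mid A\in\mathrm{Mat}_{4,4}(\mathbb{F}_p)\}$, where $1$ is the adjoined identity of $S_p^1$; and since both $G_{(1,0)}^{(p)}/(G_{(1,0)}^{(p)})'$ and $\zeta G_{(1,0)}^{(p)}$ are elementary abelian of rank $4$, we have $|U|=p^{16}$. So everything reduces to counting the orbits of the conjugation action of the group $U$ on the monoid $\mathrm{Mat}_{4,4}(\mathbb{F}_p)\tensor[_\phi]{\bowtie}{_\psi}S_p^1$.

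Second I would compute this action explicitly. Take a generic element $(A,1)\in U$ and a generic element $\varphi=(M,v)$ of the monoid. Using the defining operation $(m_1,m_2)\odot(m_1',m_2')=(m_1m_2'+m_2m_1',m_2m_2')$ together with the facts that $1$ acts as the identity on $\mathrm{Mat}_{4,4}(\mathbb{F}_p)$ from both sides while every element of $S_p$ acts as $0$ from the left, one gets: if $v=1$ then $(A,1)\odot\varphi=(A+M,1)=\varphi\odot(A,1)$; and if $v\in S_p$ then $(A,1)\odot\varphi=(A\cdot v+M,\,v)$ while $\varphi\odot(A,1)=(M,v)$, where $A\cdot v$ denotes the right action $\psi$, i.e.\ the matrix $(v_j(a_{i,1}-a_{i,2}))_{1\le i,j\le 4}$. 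Since $U$ is a group, $\varphi$ is a fixed point of the conjugation action if and only if $(A,1)\odot\varphi=\varphi\odot(A,1)$ for all $A$; by the above this is automatic when $v=1$, and when $v\in S_p$ it amounts to $A\cdot v=0$ for every $A\in\mathrm{Mat}_{4,4}(\mathbb{F}_p)$, which forces $v=\mathbf{0}$. Thus the fixed points are exactly the $p^{16}$ automorphisms together with the $p^{16}$ pairs $(M,\mathbf{0})$ (the endomorphisms whose image lies in the center), contributing $2p^{16}$ orbits of length $1$.

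Third I would deal with the remaining $p^{16}(p^4-1)$ elements $\varphi=(M,v)$ with $v\in S_p\setminus\{\mathbf{0}\}$. By the computation above, $(A,1)$ stabilizes such a $\varphi$ precisely when $A\cdot v=0$, and since some coordinate of $v$ is nonzero this holds if and only if the first two columns of $A$ coincide; there are $p^{12}$ such matrices, so the stabilizer of $\varphi$ in $U$ has order $p^{12}$, independently of $M$ and of the chosen nonzero $v$. By the orbit-stabilizer theorem, each such $\varphi$ lies in an orbit of length $|U|/p^{12}=p^4$, so these elements form $p^{16}(p^4-1)/p^4=p^{12}(p^4-1)$ orbits. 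Altogether this gives $2p^{16}+p^{12}(p^4-1)=3p^{16}-p^{12}$ orbits, of which $2p^{16}$ have length $1$ and $p^{12}(p^4-1)$ have length $p^4$, as claimed; as a consistency check, $2p^{16}+p^{12}(p^4-1)\cdot p^4=p^{20}+p^{16}$, which equals $|\mathrm{End}(G_{(1,0)}^{(p)})|$ since by Theorem \ref{exceptionalTheo} there are $p^4+1$ normalization classes. With Corollary \ref{exceptionalCor} at hand, nothing here is deep; the one point requiring care is keeping the left and right $S_p^1$-actions, and the direction of conjugation, straight, so that the stabilizer condition $A\cdot v=0$ --- and hence the orbit length $p^4$ --- comes out correctly.
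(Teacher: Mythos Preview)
Your argument is correct and is essentially the same computation as the paper's, only transported through the isomorphism of Corollary~\ref{exceptionalCor}: the paper works directly with the decomposition $\varphi=\tilde{\varphi}+g$ and computes the conjugate $(\mathrm{id}+f)\varphi(\mathrm{id}-f)=\tilde{\varphi}+g+f\tilde{\varphi}$, whereas you read off the stabilizer condition $A\cdot v=0$ in the model and invoke orbit--stabilizer. The term $f\tilde{\varphi}$ in the paper is exactly your $A\cdot v$ under $\psi$, so the two arguments coincide step for step.
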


\begin{proof}
Note that since the automorphism group of $G_{(1,0)}^{(p)}$ is abelian, under this action it decomposes into $p^{16}$ orbits of length $1$, so we can concentrate on the conjugacy classes of endomorphisms which are not automorphisms. Since automorphisms of $G_{(1,0)}^{(p)}$ are precisely the endomorphisms of the form $\mathrm{id}+f$ for some endomorphism $f$ mapping into the center, and $(\mathrm{id}+f)^{-1}=\mathrm{id}-f$, we obtain that for any $\varphi=\tilde{\varphi}+g\in\mathrm{End}(G_{(1,0)}^{(p)})\setminus\mathrm{Aut}(G_{(1,0)}^{(p)})$, the conjugates of $\varphi$ are the endomorphisms of the form $$(\mathrm{id}+f)(\tilde{\varphi}+g)(\mathrm{id}-f)=(\mathrm{id}+f)(\tilde{\varphi}+g)=\tilde{\varphi}+g+f\tilde{\varphi}.$$ The possible variation comes from the summand $f\tilde{\varphi}$; if $\tilde{\varphi}=0$, then no variation is possible and we obtain another $p^{16}$ orbits of length $1$. However, if $\tilde{\varphi}\not=0$, then $\tilde{\varphi}$ is represented by a non-zero vector, and in view of the mechanics of the right action of normalized endomorphisms on endomorphisms mapping into the center, we obtain only orbits of length $p^4$ then, since for any $i\in\{1,\ldots,4\}$ such that the $i$-th entry of $\mathrm{vec}(\tilde{\varphi})$ is not zero, we can freely control the $i$-th column of $f\tilde{\varphi}$ by appropriate choices of $f$, but for each such choice, the other columns are determined.
\end{proof}

It remains to discuss the group $G_{(1,1)}^{(2)}$. It is not difficult to see that for this group, the set of elements of order at most $2$ once again coincides with the first omega subgroup and is equal to $\langle\zeta G_{(1,1)}^{(2)},a_1a_2b_2\rangle$, and in analogy to the other exceptional cases, we have, in addition to the automorphism group, which is the normalization class of $\mathrm{id}$, $2^4=16$ normalization classes whose normalized representatives map all generators into $\langle a_1a_2b_2\rangle$. However, a search with GAP yielded that $G_{(1,1)}^{(2)}$ has 23 normalization classes of endomorphisms altogether. The six additional normalized endomorphisms are as follows:

\begin{center}
\begin{tabular}{|c|c|c|c|c|}
\hline
endomorphism & image of $a_1$ & image of $a_2$ & image of $b_1$ & image of $b_2$ \\ \hline
$\varphi_1$ & $a_2b_1b_2$ & $a_2b_1b_2$ & $a_1b_2$ & $1$ \\ \hline
$\varphi_2$ & $a_2b_1b_2$ & $a_2b_1b_2$ & $a_1a_2b_1$ & $1$ \\ \hline
$\varphi_3$ & $a_1b_2$ & $a_1b_2$ & $a_2b_1b_2$ & $1$ \\ \hline
$\varphi_4$ & $a_1b_2$ & $a_1b_2$ & $a_1a_2b_1$ & $1$ \\ \hline
$\varphi_5$ & $a_1a_2b_1$ & $a_1a_2b_1$ & $a_2b_1b_2$ & $1$ \\ \hline
$\varphi_6$ & $a_1a_2b_1$ & $a_1a_2b_1$ & $a_1b_2$ & $1$ \\ \hline
\end{tabular}
\end{center}

Unlike for the other normalized endomorphisms, the set $\{\varphi_i\mid i=1,\ldots,6\}$ is \textit{not} closed under $\circ$; it \textit{is} closed modulo normalization though. The following table shows the values of the compositions of the $\varphi_i$ (the cell in the $i$-th row and $j$-th column shows the composition $\varphi_i\circ\varphi_j$). The (endomorphisms mapping into the center represented by) matrices by which one has to shift the $\varphi_i$ to get the composition values all satisfy the property that each of their columns is either the zero vector or the vector $(1,1,0,1)^t$; among those matrices, let $M_1$ denote the one where precisely the last two columns are zero, $M_2$ the one where only the last column is zero, and $M_3:=M_1+M_2$. Then we have:

\begin{center}
\begin{tabular}{|c|c|c|c|c|c|c|}
\hline
$\circ$ & $\varphi_1$ & $\varphi_2$ & $\varphi_3$ & $\varphi_4$ & $\varphi_5$ & $\varphi_6$ \\ \hline
$\varphi_1$ & $\varphi_5+M_1$ & $\varphi_6+M_2$ & $\varphi_2+M_3$ & $\varphi_1+M_3$ & $\varphi_4+M_2$ & $\varphi_3+M_1$ \\ \hline
$\varphi_2$ & $\varphi_3$ & $\varphi_4+M_3$ & $\varphi_1$ & $\varphi_2+M_3$ & $\varphi_6+M_1$ & $\varphi_5+M_1$ \\ \hline
$\varphi_3$ & $\varphi_6$ & $\varphi_5+M_3$ & $\varphi_4$ & $\varphi_3+M_3$ & $\varphi_2+M_1$ & $\varphi_1+M_1$ \\ \hline
$\varphi_4$ & $\varphi_1+M_1$ & $\varphi_2+M_2$ & $\varphi_3+M_3$ & $\varphi_4+M_3$ & $\varphi_5+M_2$ & $\varphi_6+M_1$ \\ \hline
$\varphi_5$ & $\varphi_4+M_1$ & $\varphi_3+M_2$ & $\varphi_6+M_3$ & $\varphi_5+M_3$ & $\varphi_1+M_2$ & $\varphi_2+M_1$ \\ \hline
$\varphi_6$ & $\varphi_2$ & $\varphi_1+M_3$ & $\varphi_5$ & $\varphi_6+M_3$ & $\varphi_3+M_1$ & $\varphi_4+M_1$ \\ \hline
\end{tabular}
\end{center}

Modulo normalization, this is a Cayley table of the symmetric group $S_3$. The $\varphi_i$ correspond to the six different permutations of the normalization classes of the three elements $a_1b_2,a_2a_2b_1$ and $a_2b_1b_2$. Note that there are no endomorphisms $f_1,\ldots,f_6$ of $G_{(1,1)}^{(2)}$ such that the set $\{\varphi_i+f_i\mid i=1,\ldots,6\}$ is closed under $\circ$ (so that we cannot express $\mathrm{End}(G_{(1,1)}^{(2)})$ as a two-sided semidirect product as for the $G_{(1,0)}^{(p)}$), since otherwise, because $\varphi_4+f_4$ is a neutral element for all the $\varphi_i+f_i$, we would have $$\varphi_i+f_i=(\varphi_i+f_i)\circ(\varphi_4+f_4)=\varphi_i\circ\varphi_4+f_i\circ\varphi_4=\varphi_i+g+f_i\circ\varphi_4,$$ where $g$ is the endomorphism represented by the matrix $M_3$ from above. In view of the definition of $\varphi_4$ and denoting by $c_j$ the $j$-th column of the matrix representing $f_i$, this would yield $$c_1+c_4=c_1, c_1+c_4=c_2, c_1+c_2+c_3+(1,1,0,1)^t=c_3,$$ and thus successively $c_4=(0,0,0,0)^t,c_1=c_2$ and $(1,1,0,1)^t=(0,0,0,0)^t$, a contradiction.

Finally, just as for the $G_{(1,0)}^{(p)}$, we can show:

\begin{propposition}\label{lastProp}
The action of $\mathrm{Aut}(G_{(1,1)}^{(2)})$ on $\mathrm{End}(G_{(1,1)}^{(2)})$ by conjugation has precisely $3\cdot 2^{16}-2^{12}+6\cdot 2^8=194048$ orbits, of which $2\cdot 2^{16}=131072$ have length $1$, $2^{16}-2^{12}=61440$ have length $16$ and $6\cdot 2^8=1536$ have length $256$.
\end{propposition}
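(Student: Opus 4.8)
The plan is to follow the template of Corollary \ref{exceptionalCor2}. Since $\mathrm{Aut}(G_{(1,1)}^{(2)})$ is abelian and equals $\{f^{\ast}\mid f:G_{(1,1)}^{(2)}\to\zeta G_{(1,1)}^{(2)}\text{ a homomorphism}\}$ with $(f^{\ast})^{-1}=f^{\ast}$ (as $p=2$), I would first compute the effect of conjugation. Writing an endomorphism as $\varphi=\tilde\varphi+g$ with $\tilde\varphi$ normalized and $g$ mapping into the center, and using that every homomorphism into the center kills the center (as $\zeta G_{(1,1)}^{(2)}=(G_{(1,1)}^{(2)})'$), a short direct computation gives
$$f^{\ast}\circ\varphi\circ f^{\ast}=\tilde\varphi+g+\tilde\varphi\circ f+f\circ\tilde\varphi,$$
where both $\tilde\varphi\circ f$ and $f\circ\tilde\varphi$ are homomorphisms into the center (note that, unlike in Corollary \ref{exceptionalCor2}, the term $\tilde\varphi\circ f$ need not vanish when $\tilde\varphi$ is one of the $\varphi_i$, because these do not kill the center). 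Hence the conjugacy class of $\varphi$ is the coset $\varphi+S_{\tilde\varphi}$, where $S_{\tilde\varphi}:=\{\tilde\varphi\circ f+f\circ\tilde\varphi\mid f\}$ is a subgroup of the group of center-valued homomorphisms, which is elementary abelian of order $2^{16}$; consequently the orbit length equals $|S_{\tilde\varphi}|$ and depends only on the normalization class of $\varphi$.

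Next I would treat the three "easy" families of normalization classes. For $\tilde\varphi=\mathrm{id}$ and for $\tilde\varphi=0$ one has $S_{\tilde\varphi}=\{0\}$, so the $2^{16}$ automorphisms and the $2^{16}$ center-valued endomorphisms account for $2\cdot2^{16}$ orbits of length $1$. For each of the $15$ nonzero normalization classes whose representative sends the generators into $\langle a_1a_2b_2\rangle$, the image is abelian, $\tilde\varphi$ kills the center, and $S_{\tilde\varphi}=\{f\circ\tilde\varphi\mid f\}$; as in Corollary \ref{exceptionalCor2}, a generator sent by $\tilde\varphi$ to $a_1a_2b_2$ lets one realize the corresponding column of $f\circ\tilde\varphi$ freely over $\zeta G_{(1,1)}^{(2)}\cong\mathbb{F}_2^4$ while the remaining columns are determined, so $|S_{\tilde\varphi}|=2^4=16$. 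Each of these $15$ classes therefore splits into $2^{16}/16=2^{12}$ orbits of length $16$, giving $15\cdot2^{12}=2^{16}-2^{12}$ orbits of length $16$.

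The heart of the proof is the six exceptional classes, where one must show $|S_{\varphi_i}|=2^8$ for $i=1,\dots,6$. Identifying center-valued homomorphisms with $4\times4$ matrices over $\mathbb{F}_2$ (columns recording the images of $a_1,a_2,b_1,b_2$ in the basis $[a_1,b_1],[a_1,b_2],[a_2,b_1],[a_2,b_2]$ of $\zeta G_{(1,1)}^{(2)}$), the map $f\mapsto\varphi_i\circ f+f\circ\varphi_i$ becomes the $\mathbb{F}_2$-linear operator $T_i(F)=P_iF+F\overline{\Phi}_i$, where $P_i$ is the matrix of the action of $\varphi_i$ on $\zeta G_{(1,1)}^{(2)}$ and $\overline{\Phi}_i$ the matrix of the induced action on $G_{(1,1)}^{(2)}/\zeta G_{(1,1)}^{(2)}$; both are read off from the defining relations and the table of the $\varphi_i$, and one checks that $P_i$ has rank $1$ with image the line spanned by $[a_1,b_1][a_1,b_2][a_2,b_2]$ (the common column content of $M_1,M_2,M_3$), while $\overline{\Phi}_i$ has rank $2$. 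The claim $|S_{\varphi_i}|=2^8$ then amounts to $\mathrm{rank}(T_i)=8$, equivalently $\dim\ker T_i=8$, for each of the six operators --- a bounded computation in linear algebra over $\mathbb{F}_2$ (which, in keeping with the spirit of this section, can also be double-checked in GAP). Granting this, each exceptional class splits into $2^{16}/2^8=2^8$ orbits of length $256$, contributing $6\cdot2^8$ orbits; summing, the conjugation action has $2\cdot2^{16}+(2^{16}-2^{12})+6\cdot2^8=3\cdot2^{16}-2^{12}+6\cdot2^8=194048$ orbits, of which $2\cdot2^{16}=131072$ have length $1$, $2^{16}-2^{12}=61440$ have length $16$, and $6\cdot2^8=1536$ have length $256$.

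I expect the rank-$8$ verification for the six operators $T_i$ to be the main obstacle. It is elementary, but it is the only point at which the specific internal structure of the $\varphi_i$ --- that they arise by permuting the three elements $a_1b_2$, $a_1a_2b_1$, $a_2b_1b_2$ --- is essential, and both $P_i$ and $\overline{\Phi}_i$ (as well as the ordered commutator basis of the center) have to be extracted carefully from the presentation; an error in any of these inputs changes the computed rank.
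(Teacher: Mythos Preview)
Your setup is more careful than the paper's and, in fact, exposes a genuine issue. The paper's two-line argument tacitly reuses the conjugation formula from Corollary~\ref{exceptionalCor2}, namely $(\mathrm{id}+f)(\tilde\varphi+g)(\mathrm{id}-f)=\tilde\varphi+g+f\tilde\varphi$, which relies on $\tilde\varphi\circ f=0$. That identity holds for the seventeen ``abelian-image'' normalization classes (where $\tilde\varphi$ kills the center), but \emph{not} for the six $\varphi_i$, whose images are nonabelian. You correctly pick this up and arrive at the right formula $f^{\ast}\varphi f^{\ast}=\tilde\varphi+g+\tilde\varphi\circ f+f\circ\tilde\varphi$, leading to the Sylvester-type operator $T_i(F)=P_iF+F\overline{\Phi}_i$.

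The problem is the next step: you assert that $\mathrm{rank}(T_i)=8$, to be confirmed by a routine check. That check fails. For instance, take $i=1$ and $F=E_{14}$ (the elementary matrix with a single $1$ in position $(1,4)$). Then $F\overline{\Phi}_1$ has zero fourth column, while $P_1F$ has fourth column $(1,1,0,1)^t$; hence $T_1(E_{14})$ has a nonzero fourth column and lies outside the $8$-dimensional image of $F\mapsto F\overline{\Phi}_1$. A full computation gives $\dim\ker T_1=6$, i.e.\ $\mathrm{rank}(T_1)=10$, so the orbits in the exceptional classes have length $2^{10}=1024$, not $256$. In other words, the very term $P_iF$ that you (rightly) added, and that the paper's sketch omits, changes the answer. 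Your plan therefore cannot establish the proposition as stated; carried through honestly, it instead shows that the six exceptional classes contribute $6\cdot 2^6$ orbits of length $1024$, and the stated count $6\cdot 2^8$ of orbits of length $256$ is in error.
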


\begin{proof}
The 17 normalization classes which do not contain any of the $\varphi_i$ yield $2\cdot 2^{16}$ orbits of length $1$ and $2^{16}-2^{12}$ orbits of length $2^4$, just as for the $G_{(1,0)}^{(p)}$. As for the six additional normalization classes, note that since the rank of the image of any $\varphi_i$ under the canonical projection to the abelianization is $2$, we have more variation here, yielding only orbits of length $2^8$.
\end{proof}

\begin{remmark}\label{veryLastRem}
Our results show that for JK-groups, the number of conjugacy classes of the endomorphism monoid is roughly quadratic in their order $p^8$. In general, however, the order of the automorphism group of a finite aut-abelian $p$-group $G$ (and hence a fortiori the number of conjugacy classes of the endomorphism monoid) cannot be bounded by a polynomial in the order of $G$. This follows directly from the already mentioned results of Morigi \cite{Mor94a}, by observing that $\frac{(n^2+n+1)(2n+2)}{n^2+3n+3}$ is not $O(1)$.
\end{remmark}

\section{Acknowledgements}

The author would like to thank Peter Hellekalek for his helpful comments and Benjamin Steinberg for pointing out the paper \cite{RT89a} to him in a discussion on the Internet site mathoverflow (\url{http://mathoverflow.net/questions/180376/}).

\end{document}